\DeclareMathOperator{\tr}{tr}
\DeclareMathOperator{\dvol}{dvol}
\DeclareMathOperator{\Ric}{Ric}
\DeclareMathOperator{\Rm}{Rm}
\newcommand{\multinom}[3]{\binom{#1}{#2 \, ; \, #3}}
\newcommand{\defn}[1]{{\boldmath\bfseries#1}}
\newcommand{\cg}{\widetilde{g}}
\newcommand{\cf}{\widetilde{f}}
\newcommand{\cu}{\widetilde{u}}
\newcommand{\cv}{\widetilde{v}}
\newcommand{\cD}{\widetilde{D}}
\newcommand{\cI}{\widetilde{I}}
\newcommand{\cnabla}{\widetilde{\nabla}}
\newcommand{\cDelta}{\widetilde{\Delta}}
\newcommand{\cmE}{\widetilde{\mathcal{E}}}
\newcommand{\cmG}{\widetilde{\mathcal{G}}}
\newcommand{\cmM}{\widetilde{\mathcal{M}}}
\DeclareMathOperator{\cRm}{\widetilde{\Rm}}
\newcommand{\bbg}{\boldsymbol{g}}
\newcommand{\lv}{\lvert}
\newcommand{\rv}{\rvert}
\newcommand{\mC}{\mathcal{C}}
\newcommand{\mE}{\mathcal{E}}
\newcommand{\mG}{\mathcal{G}}
\newcommand{\mI}{\mathcal{I}}
\newcommand{\mL}{\mathcal{L}}
\newcommand{\mM}{\mathcal{M}}
\newcommand{\mO}{\mathcal{O}}
\newcommand{\mR}{\mathcal{R}}
\newcommand{\bN}{\mathbb{N}}
\newcommand{\bR}{\mathbb{R}}
\def\sideremark#1{\ifvmode\leavevmode\fi\vadjust{\vbox to0pt{\vss
 \hbox to 0pt{\hskip\hsize\hskip1em
 \vbox{\hsize3cm\tiny\raggedright\pretolerance10000
 \noindent #1\hfill}\hss}\vbox to8pt{\vfil}\vss}}}
\newcommand{\suchthatcolon}{\mathrel{}:\mathrel{}}
\newtheorem{theorem}{Theorem}[section]
\newtheorem{lemma}[theorem]{Lemma}
\newtheorem{corollary}[theorem]{Corollary}
\theoremstyle{definition}
\theoremstyle{remark}
\newtheorem{remark}[theorem]{Remark}
\numberwithin{equation}{section}
\begin{document}

\title{Curved versions of the Ovsienko--Redou operators}
\author{Jeffrey S. Case}
\address{Department of Mathematics \\ Penn State University \\ University Park, PA 16802 \\ USA}
\email{jscase@psu.edu}
\author{Yueh-Ju Lin}
\address{Department of Mathematics, Statistics, and Physics \\ Wichita State University \\ Wichita, KS 67260 \\ USA}
\email{yueh-ju.lin@wichita.edu}
\author{Wei Yuan}
\address{Department of Mathematics \\ Sun Yat-sen University \\ Guangzhou, Guangdong 510275 \\ China}
\email{yuanw9@mail.sysu.edu.cn}
\keywords{Ovsienko--Redou operator; conformally invariant bidifferential operator; conformally invariant operator}
\subjclass[2020]{Primary 58J70; Secondary 53A40}
\begin{abstract}
 We give a complete classification of tangential bidifferential operators of total order at most $n$ which are expressed purely in terms of the Laplacian on the ambient space of an $n$-dimensional manifold.
 This gives a curved analogue of the classification, due to Ovsienko--Redou and Clerc, of conformally invariant bidifferential operators on the sphere.
 As an application, we construct a large class of formally self-adjoint conformally invariant differential operators.
\end{abstract}
\maketitle

\section{Introduction}
\label{sec:intro}

Representation theory completely classifies~\cite{Branson1995} the conformally invariant differential operators on the $n$-sphere:
The space of conformally invariant differential operators of order $2k \in \bN$ is the span of the restriction $\cDelta^k\cu\rv_{\mG}$, where $\cDelta$ is the Laplacian on Minkowski space $(\bR^{n+1,1},-d\tau^2+dx^2)$ with $S^n$ identified with the projectivization of the null cone $\mG := \left\{ (x,\tau) \in \bR^{n+1,1} \suchthatcolon \lv x\rv^2 = \tau^2 \right\}$, and $\cu$ is an element of the space $\cmE\bigl[-\frac{n-2k}{2}\bigr]$ of functions on $\bR^{n+1,1}$ which are homogeneous of degree $-\frac{n-2k}{2}$ with respect to the dilations $(x,\tau) \mapsto (cx,c\tau)$.
Graham, Jenne, Mason and Sparling proved~\cite{GJMS1992} that when $k \leq n/2$, applying this construction to the Fefferman--Graham ambient space~\cite{FeffermanGraham2012} of a pseudo-Riemannian $n$-manifold yields nontrivial operators.
The resulting \defn{GJMS operators} are conformally invariant differential operators with leading-order term $\Delta^k$.
While $\cDelta$ is formally self-adjoint on $\bR^{n+1,1}$, an additional argument is needed to conclude that the GJMS operators are formally self-adjoint~\cites{GrahamZworski2003,FeffermanGraham2013,Juhl2013,FeffermanGraham2002}.

Representation theory also gives~\cites{OvsienkoRedou2003,Clerc2016,Clerc2017} a complete classification of the space of conformally invariant bidifferential operators on the $n$-sphere, though now the situation is much more complicated:
Ovsienko and Redou showed~\cite{OvsienkoRedou2003} that for generic $w_1,w_2 \in \bR$, the space of conformally invariant bidifferential operators $D_{2k;w_1,w_2} \colon \cmE[w_1] \otimes \cmE[w_2] \to \cmE[w_1+w_2-2k]$ of total order $2k$ is one-dimensional.
Clerc classified~\cites{Clerc2016,Clerc2017} the space of conformally invariant bidifferential operators $D_{2k;w_1,w_2}$ for the remaining weights, showing that it can be one-, two-, or three-dimensional, depending on the choice of $w_1,w_2$, the total order $2k$, and the dimension $n$.
On curved manifolds, Case, Lin and Yuan~\cite{CaseLinYuan2018b} constructed the operator $D_{2k;-\frac{n-2k}{3},-\frac{n-2k}{3}}$ on all conformal manifolds of dimension $n \geq 2k$.
Note that $w_1=w_2=-\frac{n-2k}{3}$ is the unique choice of weights for which $D_{2k;w_1,w_2}$ can be \defn{formally self-adjoint};
i.e.\ for which the associated Dirichlet form
\begin{equation*}
 (u_1, u_2, u_3) \mapsto \int_M u_1 \, D_{2k;w_1,w_2}( u_2 \otimes u_3) \, \dvol
\end{equation*}
on compactly-supported functions is symmetric.
On the sphere, one can prove that $D_{2k;-\frac{n-2k}{3},-\frac{n-2k}{3}}$ is formally self-adjoint via the realization of its Dirichlet form as the residue of a symmetric conformally invariant trilinear form~\cite{BeckmannClerc2012}.
Case, Lin and Yuan~\cite{CaseLinYuan2018b} proved by direct computation that $D_{2k;-\frac{n-2k}{3},-\frac{n-2k}{3}}$ is formally self-adjoint on curved manifolds if $k \leq 2$.

Our primary goal is to show that the full range of operators identified by Ovsienko--Redou~\cite{OvsienkoRedou2003} and Clerc~\cites{Clerc2016,Clerc2017} admit curved analogues under the natural dimensional assumption $n \geq 2k$.
Our secondary goal is to better understand the formal self-adjointness of $D_{2k;-\frac{n-2k}{3},-\frac{n-2k}{3}}$ and related linear operators.

We approach this problem by adapting the construction~\cite{GJMS1992} of the GJMS operators to bidifferential operators (cf.\ \cite{CaseLinYuan2018b}).
Specifically, given a conformal manifold $(M^n,[g])$, we determine those coefficients $b_{s,t}$ for which the operator
\begin{equation}
 \label{eqn:general-form}
 \cu \otimes \cv \mapsto \sum_{s=0}^k \sum_{t=0}^{k-s} b_{s,t}\cDelta^{k-s-t}\left(\bigl( \cDelta^s\cu\bigr) \bigl(\cDelta^t\cv\bigr) \right)
\end{equation}
on the Fefferman--Graham~\cite{FeffermanGraham2012} ambient space $(\cmG,\cg)$ defines a tangential operator on $\cmE[w_1] \otimes \cmE[w_2]$.
The relevance of this is that tangential operators on $(\cmG,\cg)$ determine conformally invariant operators on $(M^n,g)$;
see \cref{sec:bg} for details.
The answer, given in \cref{when-tangential} below, is a system of two recursive relations.
This recursive relation is readily solved when $w_1,w_2 \not\in \mI_k$ and $w_1+w_2 \not\in \mO_k$, where
\begin{align*}
 \mI_k & := \left\{ -\frac{n-2k}{2} - \ell \right\}_{\ell=0}^{k-1} , \\
 \mO_k & := \left\{ -\frac{n-2k}{2} + \ell \right\}_{\ell=0}^{k-1} .
\end{align*}
Restricting this observation to the conformal class of the round sphere recovers the conformally invariant bidifferential operators classified by Ovsienko and Redou~\cite{OvsienkoRedou2003}.
Taking $w_1=w_2=-\frac{n-2k}{3}$ on general conformal $2k$-manifolds recovers the conformally invariant operators previously constructed by Case, Lin and Yuan~\cite{CaseLinYuan2018b}.

More generally, the above recursive relation can be solved for all $w_1, w_2 \in \bR$.
As in Clerc's work on the sphere~\cites{Clerc2016,Clerc2017}, the space of tangential operators~\eqref{eqn:general-form} --- called \defn{Ovsienko--Redou operators} --- can be one-, two-, or three-dimensional.
The following three theorems completely characterize the space of Ovsienko--Redou operators.

Our first theorem characterizes those weights for which the space of Ovsienko--Redou operators is one-dimensional, and gives a basis for this space.
To simplify our notation, we set
\begin{equation*}
 \multinom{k}{s}{t} := \frac{k!}{(k-s-t)!s!t!}
\end{equation*}
for $k,s,t \in \bN_0$.

\begin{theorem}
 \label{ovsienko-redou-1d}
 Let $(M^n,[g])$ be a conformal manifold.
 Let $k \leq n/2$ be a positive integer and let $w_1,w_2 \in \bR$.
 Suppose that either
 \begin{enumerate}
  \item at most one of $w_1 \in \mI_k$ or $w_2 \in \mI_k$ or $w_1+w_2 \in \mO_k$ holds;
  \item $w_1,w_2 \in \mI_k$ with $w_1+w_2+n \leq k$, but $w_1+w_2 \not\in \mO_k$;
  \item $w_1 \in \mI_k$ and $w_1+w_2 \in \mO_k$ with $w_2 \geq k$, but $w_2 \not\in \mI_k$; or
  \item $w_2 \in \mI_k$ and $w_1+w_2 \in \mO_k$ with $w_1 \geq k$, but $w_1 \not\in \mI_k$.
 \end{enumerate}
 Then the space of Ovsienko--Redou operators is one-dimensional and spanned by
 \begin{align*}
  \cD_{2k;w_1,w_2}(\cu \otimes \cv) & := \sum_{s=0}^k \sum_{t=0}^{k-s} \multinom{k}{s}{t} a_{s,t} \cDelta^{k-s-t} \bigl( (\cDelta^s\cu)(\cDelta^t\cv) \bigr) , \\
  a_{s,t} & := \frac{\Gamma\bigl(-w_1-w_2-\frac{n-2k}{2}+s+t\bigr)\Gamma\bigl(w_1 + \frac{n}{2}-s\bigr)\Gamma\bigl(w_2 + \frac{n}{2} - t\bigr)}{\Gamma\bigl(-w_1-w_2-\frac{n-2k}{2}\bigr)\Gamma\bigl(w_1+\frac{n-2k}{2}\bigr)\Gamma\bigl(w_2+\frac{n-2k}{2}\bigr)} ,
 \end{align*}
 where the ratio of Gamma functions is understood via analytic continuation.
 Moreover, for each $g \in [g]$, the operator $\cD_{2k;w_1,w_2}$ determines a natural bidifferential operator $D_{2k;w_1,w_2}^g \colon C^\infty(M)^{\otimes 2} \to C^\infty(M)$ such that
 \begin{equation}
  \label{eqn:conformal-transformation}
  D_{2k;w_1,w_2}^{e^{2\Upsilon}g} \bigl( u \otimes v \bigr) = e^{(w_1+w_2-2k)\Upsilon} D_{2k;w_1,w_2}^{g} \bigl( e^{-w_1\Upsilon}u \otimes e^{-w_2\Upsilon}v \bigr)
 \end{equation}
 for all $u,v,\Upsilon \in C^\infty(M)$.
\end{theorem}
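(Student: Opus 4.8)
The plan is to reduce the statement to the recursive characterization of tangentiality in \cref{when-tangential} and then to solve the resulting linear system on the triangular index set $T := \{(s,t) \in \bN_0 \times \bN_0 : s+t \leq k\}$. By \cref{when-tangential}, the operator \eqref{eqn:general-form} is tangential on $\cmE[w_1] \otimes \cmE[w_2]$ exactly when its coefficients $b_{s,t}$ satisfy the two stated recursions. Setting $b_{s,t} = \multinom{k}{s}{t} a_{s,t}$ absorbs the combinatorial factors generated by the Leibniz rule for $\cDelta$ applied to products, so that I expect these recursions to reduce to first-order relations in the $s$- and $t$-directions among the $a_{s,t}$. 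The theorem then splits into two tasks: exhibiting an explicit solution, and proving that the solution space is one-dimensional under each of the hypotheses (1)--(4).

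For the first task I would verify directly that the stated ratio of Gamma functions solves both recursions. Using the functional equation $\Gamma(z+1)=z\,\Gamma(z)$, the ratios $a_{s+1,t}/a_{s,t}$ and $a_{s,t+1}/a_{s,t}$ collapse to single linear factors in $w_1,w_2,n,s,t$, and matching these against the recursion coefficients is a routine identity. The role of the phrase \emph{via analytic continuation} is the following: when $w_1 \in \mI_k$, $w_2 \in \mI_k$, or $w_1+w_2 \in \mO_k$, one of the three Gamma factors in the denominator acquires a pole, so the corresponding $1/\Gamma$ prefactor vanishes and forces $a_{s,t}=0$ on part of $T$, while on the complementary region the matching numerator factor is also singular and the resulting limit is finite. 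I would check that in cases (1)--(4) this yields a well-defined, nonzero operator $\cD_{2k;w_1,w_2}$ that still satisfies the recursions by continuity.

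For the dimension count I would work with the directed graph on $T$ whose edges are the first-order recursions. Away from the loci where a recursion coefficient vanishes, every $a_{s,t}$ is determined by $a_{0,0}$, so the generic solution space is one-dimensional. Each resonance degenerates a family of edges: $w_1 \in \mI_k$ along a column $s=s_0$, $w_2 \in \mI_k$ along a row $t=t_0$, and $w_1+w_2 \in \mO_k$ along an anti-diagonal $s+t=d_0$, with $s_0,t_0 \in \{1,\dots,k\}$ and $d_0 \in \{0,\dots,k-1\}$ determined by the relevant member of $\mI_k$ or $\mO_k$. A degenerate edge either forces the coefficient on its far side to vanish or severs the graph; propagating these forced zeros through the still-invertible recursions pins one side of each wall to $0$ and leaves a single free parameter on the surviving region, provided that region is nonempty. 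The delicate point --- and the place where I expect the real work to be --- is the interaction of the walls with the outer boundary $s+t=k$ of $T$, since a corner cell can escape every forcing constraint and contribute an extra parameter. This is exactly where the inequalities enter: one computes that $w_1+w_2+n \leq k$ is equivalent to $s_0+t_0 \leq k$ in case (2), while $w_2 \geq k$ and $w_1 \geq k$ are equivalent to $s_0 \leq d_0$ and $t_0 \leq d_0$ in cases (3) and (4); in each case the surviving region is then nonempty, and because the $\mO_k$ wall in (3) and (4) confines the support to $s+t \leq d_0 < k$ away from the boundary, no spurious parameter appears. Confirming both that the continued operator is nonzero and that exactly one parameter survives --- that is, that we sit at the boundary of the one-dimensional regime rather than the two- or three-dimensional regimes of the companion theorems --- is the main obstacle.

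Finally, the passage to $M$ is supplied by the ambient--tangential correspondence recalled in \cref{sec:bg}: a tangential operator $\cmE[w_1] \otimes \cmE[w_2] \to \cmE[w_1+w_2-2k]$ restricts, for each $g \in [g]$ together with the homogeneous extension of densities it determines, to a bidifferential operator $D_{2k;w_1,w_2}^g$ on $C^\infty(M)$, and naturality follows from that of the ambient metric $\cg$ and the operators $\cDelta$ in the jets of $g$. The transformation law \eqref{eqn:conformal-transformation} then falls out of comparing the extensions associated to $g$ and $e^{2\Upsilon}g$, the exponents $-w_1$, $-w_2$, and $w_1+w_2-2k$ being precisely the homogeneity degrees of the two inputs and the output.
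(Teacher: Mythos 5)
Your strategy coincides with the paper's own proof: both reduce the statement to the recursive characterization of tangentiality in \cref{when-tangential}, verify that the Gamma-ratio coefficients solve the recursions (your use of $\Gamma(z+1)=z\,\Gamma(z)$ is exactly how the paper's coefficients arise, and the vanishing/continuation behavior you describe is correct), and obtain the dimension count by tracking where the recursion coefficients vanish on the index set $T$. Your wall picture is accurate, and your translations of the hypotheses --- $w_1+w_2+n\leq k$ iff $s_0+t_0\leq k$ in case (2), and $w_2\geq k$ iff $s_0\leq d_0$ in case (3), similarly in case (4) --- are precisely what makes the forced-zero regions overlap so that a single component, hence a single free parameter, survives; this matches the paper's case analysis (which, like your sketch, works out one representative case per family and declares the rest similar) and is consistent with the two-dimensional regimes of \cref{ovsienko-redou-2d}.

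There is, however, one genuine gap: you never use the hypothesis $k\leq n/2$, and the final paragraph of your proposal is where it is needed. Tangentiality guarantees only that $\iota^\ast\cD(\cu\otimes\cv)$ is independent of the choice of extensions $\cu,\cv$; it does not guarantee independence of the choice of ambient metric. In even dimensions $g_\rho$ is determined only modulo $O^+(\rho^{n/2})$, and the correspondence recalled in \cref{sec:bg} produces a well-defined operator on $M$ only after one checks that the restriction of $\cD$ to $\rho^{-1}(\{0\})$ depends on $\cg$ only modulo this ambiguity. The paper does this via the normal-form formula \eqref{eqn:ambient-laplacian-formula} for $\cDelta$: expanding the composition of ambient Laplacians shows that $D_{2k;w_1,w_2}$ depends only on $g_{ij}^{(\ell)}\rv_{\rho=0}$ for $\ell\leq k-1$ and on the trace $g^{ij}g_{ij}^{(k)}\rv_{\rho=0}$, all of which are unambiguous precisely because $k\leq n/2$; the same expansion is what establishes naturality of $D_{2k;w_1,w_2}^g$ as a bidifferential operator. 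Your appeal to ``naturality of $\cg$ and $\cDelta$ in the jets of $g$'' cannot substitute for this, since the ambient metric is natural only up to the order-$n/2$ ambiguity; without this step the operator on $(M^n,g)$ is not even well defined when $n$ is even, let alone natural.
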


The operator $D_{2k;w_1,w_2}^g$ is \defn{natural} in the sense that it can be expressed as a complete contraction of polynomials in the covariant derivatives of the Riemannian curvature tensor, of $u$, and of $v$.
The conformal transformation law~\eqref{eqn:conformal-transformation} is succinctly expressed as the statement that $D_{2k;w_1,w_2}$ is a conformally invariant map
\begin{equation*}
 D_{2k;w_1,w_2} \colon \mE[w_1] \otimes \mE[w_2] \to \mE[w_1 + w_2 - 2k] ,
\end{equation*}
where $\mE[w]$ is the conformal density bundle
\begin{equation*}
 \mE[w] := \left\{ \cu \rv_{\mG} \suchthatcolon \cu \in \cmG[w] \right\} ;
\end{equation*}
see \cref{sec:bg} for further details.

Recall that the Gamma function $\Gamma(z)$ has no zeros, has simple poles at the nonpositive integers, and that $\Gamma(z)$ is real-valued for $z \in \bR$.
Under the conditions of \cref{ovsienko-redou-1d}, any pole of the numerator of $a_{s,t}$ is also a pole of the denominator.
Moreover, the order of each pole of the numerator is at most the order of the same pole of the denominator, and there is at least one choice of $s,t \in \bN_0$ such that $a_{s,t} \not= 0$.
It follows that $D_{2k;w_1,w_2}$ is a nontrivial real operator under the conditions of \cref{ovsienko-redou-1d}.

The exceptional set $\mI_k$ in \cref{ovsienko-redou-1d} has an interesting interpretation in terms of harmonic extensions.
Specifically, if $w \not\in \mI_k$ and $u \in \mE[w]$, then there is~\cite{GJMS1992} an extension $\cu \in \cmE[w]$ of $u$ such that $\cDelta^k\cu = 0$ along the set $\mG \subset \cmG$ which is identified with $(M^n,[g])$.
By contrast, if $w_1 \in \mI_k$ and $w_1,w_2$ satisfy one of the assumptions in \cref{ovsienko-redou-1d}, then $a_{s,t}=0$ if $s < k-i$, where $w_1 = -\frac{n-2k}{2}-i$.
Thus the only nonzero terms in the expression for $\cD_{2k;w_1,w_2}$ involve those powers $\cDelta^su$ which cannot be assumed to vanish along $\mG$.

Our second theorem characterizes those weights for which the space of Ovsienko--Redou operators is two-dimensional, and gives a basis for this space.

\begin{theorem}
 \label{ovsienko-redou-2d}
 Let $(M^n,[g])$ be a conformal manifold.
 Let $k \leq n/2$ be a positive integer and let $w_1,w_2 \in \bR$.
 \begin{enumerate}
  \item If $w_1,w_2 \in \mI_k$ with $w_1+w_2+n > k$ but $w_1+w_2 \not\in \mO_k$, then the space of Ovsienko--Redou operators is two-dimensional and spanned by
  \begin{align*}
   D_{2k;w_1,w_2}^{(1)} & := \sum_{s=k-i}^k \sum_{t=0}^{k-s} \multinom{k}{s}{t} a_{s,t} \cDelta^{k-s-t} \bigl( (\cDelta^s\cu) (\cDelta^t\cv) \bigr) , \\
   D_{2k;w_1,w_2}^{(2)} & := \sum_{s=0}^j \sum_{t=k-j}^{k-s} \multinom{k}{s}{t} a_{s,t} \cDelta^{k-s-t} \bigl( (\cDelta^s\cu) (\cDelta^t\cv) \bigr) , \\
   a_{s,t} & := \frac{\Gamma\bigl(-w_1-w_2-\frac{n-2k}{2}+s+t\bigr)\Gamma\bigl(w_1 + \frac{n}{2}-s\bigr)\Gamma\bigl(w_2 + \frac{n}{2} - t\bigr)}{\Gamma\bigl(-w_1-w_2-\frac{n-2k}{2}\bigr)\Gamma\bigl(w_1+\frac{n-2k}{2}\bigr)\Gamma\bigl(w_2+\frac{n}{2}\bigr)} ,
  \end{align*}
  where $w_1 = -\frac{n-2k}{2}-i$ and $w_2 = -\frac{n-2k}{2}-j$ and the ratio of Gamma functions is understood via analytic continuation.
  \item If $w_1 \in \mI_k$ and $w_1+w_2 \in \mO_k$ with $w_2 < k$ but $w_2 \not\in \mI_k$, then the space of Ovsienko--Redou operators is two-dimensional and spanned by
  \begin{align*}
   D_{2k;w_1,w_2}^{(1)} & := \sum_{s=k-i}^k \sum_{t=0}^{k-s} \multinom{k}{s}{t} a_{s,t} \cDelta^{k-s-t} \bigl( (\cDelta^s\cu) (\cDelta^t\cv) \bigr) , \\
   D_{2k;w_1,w_2}^{(2)} & := \sum_{s=0}^{j} \sum_{t=0}^{j-s} \multinom{k}{s}{t} a_{s,t} \cDelta^{k-s-t} \bigl( (\cDelta^s\cu) (\cDelta^t\cv) \bigr) , \\
   a_{s,t} & := \frac{\Gamma\bigl(-w_1-w_2-\frac{n-2k}{2}+s+t\bigr)\Gamma\bigl(w_1 + \frac{n}{2}-s\bigr)\Gamma\bigl(w_2 + \frac{n}{2} - t\bigr)}{\Gamma\bigl(-w_1-w_2-\frac{n-2k}{2}\bigr)\Gamma\bigl(w_1+\frac{n}{2}\bigr)\Gamma\bigl(w_2+\frac{n-2k}{2}\bigr)} ,
  \end{align*}
  where $w_1 = -\frac{n-2k}{2}-i$ and $w_1+w_2 = -\frac{n-2k}{2} + j$ and the ratio of Gamma functions is understood via analytic continuation.
  \item If $w_2 \in \mI_k$ and $w_1+w_2 \in \mO_k$ with $w_1 < k$ but $w_1 \not\in \mI_k$, then the space of Ovsienko--Redou operators is two-dimensional and spanned by
  \begin{align*}
   D_{2k;w_1,w_2}^{(1)} & := \sum_{s=0}^i \sum_{t=k-i}^{k-s} \multinom{k}{s}{t} a_{s,t} \cDelta^{k-s-t} \bigl( (\cDelta^s\cu) (\cDelta^t\cv) \bigr) , \\
   D_{2k;w_1,w_2}^{(2)} & := \sum_{s=0}^j \sum_{t=0}^{j-s} \multinom{k}{s}{t} a_{s,t} \cDelta^{k-s-t} \bigl( (\cDelta^s\cu) (\cDelta^t\cv) \bigr) , \\
   a_{s,t} & := \frac{\Gamma\bigl(-w_1-w_2-\frac{n-2k}{2}+s+t\bigr)\Gamma\bigl(w_1 + \frac{n}{2}-s\bigr)\Gamma\bigl(w_2 + \frac{n}{2} - t\bigr)}{\Gamma\bigl(-w_1-w_2-\frac{n-2k}{2}\bigr)\Gamma\bigl(w_1+\frac{n-2k}{2}\bigr)\Gamma\bigl(w_2+\frac{n}{2}\bigr)} ,
  \end{align*}
  where $w_2 = -\frac{n-2k}{2}-i$ and $w_1+w_2 = -\frac{n-2k}{2} + j$ and the ratio of Gamma functions is understood via analytic continuation.
 \end{enumerate}
 Moreover, the operators $\cD_{2k;w_1,w_2}^{(j)}$, $j \in \{ 1, 2\}$, determine natural conformally invariant bidifferential operators $D_{2k;w_1,w_2}^{(j)} \colon \mE[w_1] \otimes \mE[w_2] \to \mE[w_1 + w_2 - 2k]$.
\end{theorem}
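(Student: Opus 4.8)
The plan is to reduce the theorem to an analysis of the two recursive relations characterizing tangentiality in \cref{when-tangential}, and then to invoke the general correspondence (set up in \cref{sec:bg}) between tangential operators on the ambient space $(\cmG,\cg)$ and natural conformally invariant operators on $(M^n,[g])$. Writing a candidate operator in the form \eqref{eqn:general-form} and substituting $b_{s,t}=\multinom{k}{s}{t}a_{s,t}$, the tangentiality conditions become two linear recursions for the array $(a_{s,t})$ indexed by the simplex $\{(s,t):s,t\ge 0,\ s+t\le k\}$, one advancing $s$ and one advancing $t$. My first step is to record these recursions explicitly and to locate the three values at which a recursion coefficient degenerates: $s=k-i$ when $w_1\in\mI_k$ (writing $w_1=-\frac{n-2k}{2}-i$), $t=k-j$ when $w_2\in\mI_k$ (writing $w_2=-\frac{n-2k}{2}-j$), and $s+t=\ell$ when $w_1+w_2\in\mO_k$ (writing $w_1+w_2=-\frac{n-2k}{2}+\ell$). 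For generic weights none of these degeneracies occurs and the recursion propagates a single free constant across the whole simplex, recovering the one-dimensional space of \cref{ovsienko-redou-1d}; the content here is that each of the three listed regimes activates exactly two degeneracies, adding one decoupled sector apiece.

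The second step is to exploit these degeneracies to split the solution space. Each active locus blocks the corresponding recursion and supports a solution on a sub-simplex: $w_1\in\mI_k$ on $\{s\ge k-i\}$, $w_2\in\mI_k$ on $\{t\ge k-j\}$, and $w_1+w_2\in\mO_k$ on $\{s+t\le \ell\}$. In all three cases the relevant inequality hypothesis forces these two sub-simplices to be disjoint --- for instance in case (1) one computes $w_1+w_2+n=2k-i-j$, so the hypothesis reads $i+j<k$, whence $s\ge k-i$ and $t\ge k-j$ would force $s+t\ge 2k-i-j>k$; cases (2) and (3) are analogous with $w_2=i+\ell$, resp.\ $w_1=j+\ell$. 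Disjointness of supports yields linear independence of the two resulting operators. I would then verify directly, using the functional equation $\Gamma(z+1)=z\,\Gamma(z)$, that the displayed Gamma-ratio coefficients $a_{s,t}$ solve the recursion on each sub-simplex, checking that the stated normalization (with $\Gamma(w_2+\frac n2)$, resp.\ $\Gamma(w_1+\frac n2)$, replacing $\Gamma(w_i+\frac{n-2k}{2})$) is the one for which the ratio is finite and nonzero on the intended region and vanishes off it.

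The dimension count then has two halves. The lower bound is immediate once $\cD_{2k;w_1,w_2}^{(1)}$ and $\cD_{2k;w_1,w_2}^{(2)}$ are shown to be tangential and independent. For the upper bound I would argue that, on the complement of the active loci, the recursion still determines each $a_{s,t}$ from a single boundary value in its sector, so that the solution space injects into the two-dimensional space of these two sector constants; this rules out a third independent solution and pins the dimension at exactly two. Finally, the \emph{moreover} clause is formal: since each $\cD_{2k;w_1,w_2}^{(j)}$ is tangential by construction, the machinery of \cref{sec:bg} produces a natural bidifferential operator $D_{2k;w_1,w_2}^{(j)}\colon \mE[w_1]\otimes\mE[w_2]\to\mE[w_1+w_2-2k]$, with conformal invariance following from tangentiality and naturality inherited from that of the Fefferman--Graham ambient metric and the ambient Laplacian.

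I expect the main obstacle to be the bookkeeping of the analytic continuation. Because the two active degeneracies live in different weight variables, the Gamma-ratio $a_{s,t}$ must be continued jointly in $(w_1,w_2)$, and the two basis operators correspond to two different directions of approach to the special weights: along one direction the coefficients on one sub-simplex survive while those on the other are suppressed, and vice versa. Making this precise --- that each $\cD_{2k;w_1,w_2}^{(j)}$ is the clean limit in which a single simple pole of the numerator cancels a single simple pole of the denominator, that the resulting coefficients are real and nonzero exactly on the stated index range, and that no additional solution is hidden at the degeneracy loci themselves --- is the delicate point, and it must be carried out separately in each of the three cases.
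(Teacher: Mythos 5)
Your proposal is correct and takes essentially the same route as the paper's proof: reduce tangentiality to the two recursions of \cref{when-tangential}, identify which degeneracy loci each weight hypothesis activates, use the inequality hypotheses ($i+j<k$ in case (1), and its analogues $w_2<k$, $w_1<k$ in cases (2)--(3)) to show the two supported sub-simplices are disjoint, conclude that every solution is determined by the two sector constants, and then invoke the \cref{sec:bg} machinery for naturality and conformal invariance. The only differences are presentational: you treat the three cases uniformly and verify the closed-form Gamma-ratio coefficients via $\Gamma(z+1)=z\Gamma(z)$, whereas the paper works out case (1) in detail (normalizing by the corner values $a_{k-i,0}$ and $a_{0,k-j}$ rather than by the Gamma ratios) and declares the remaining cases similar.
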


The only difference in the coefficients $a_{s,t}$ in \cref{ovsienko-redou-1d} and \cref{ovsienko-redou-2d} is a change in the denominators which ensures that $a_{s,t}$ are real-valued and not all zero in the range $s,t \geq0$ and $s+t \leq k$.
The summations in \cref{ovsienko-redou-2d} are indexed so that only nonzero terms $a_{s,t}$ are included.

Our third theorem characterizes those weights for which the space of Ovsienko--Redou operators is three-dimensional, and gives a basis for this space.
Note that this case never occurs when $n>2k$, and there is only one choice of weights for which this case occurs when $n=2k$.

\begin{theorem}
 \label{ovsienko-redou-3d}
 Let $(M^n,[g])$ be a conformal manifold.
 Let $k \leq n/2$ be a positive integer and let $w_1,w_2 \in \bR$.
 Suppose that $w_1,w_2 \in \mI_k$ and $w_1+w_2 \in \mO_k$.
 Then $w_1=w_2=0$ and $k=n/2$, and the space of Ovsienko--Redou operators is three-dimensional and spanned by
 \begin{align*}
  \cD_{n;0,0}^{(1)}(\cu \otimes \cv) & := \cDelta^k(\cu\cv) , \\
  \cD_{n;0,0}^{(2)}(\cu \otimes \cv) & := \cu\cDelta^k\cv , \\
  \cD_{n;0,0}^{(3)}(\cu \otimes \cv) & := \cv\cDelta^k\cu .
 \end{align*}
 Moreover, the operators $\cD_{2k;w_1,w_2}^{(j)}$, $j \in \{ 1, 2, 3 \}$, determine natural conformally invariant bidifferential operators $D_{2k;w_1,w_2}^{(j)} \colon \mE[w_1] \otimes \mE[w_2] \to \mE[w_1 + w_2 - 2k]$.
\end{theorem}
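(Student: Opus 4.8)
The plan is to first collapse the four-part-looking hypothesis into the single case $n=2k$, $w_1=w_2=0$ by a short arithmetic argument, and then to solve the recursion of \cref{when-tangential} explicitly at these values. For the reduction, write $w_1=-\tfrac{n-2k}{2}-i$ and $w_2=-\tfrac{n-2k}{2}-j$ with $i,j\in\{0,\dots,k-1\}$, and $w_1+w_2=-\tfrac{n-2k}{2}+\ell$ with $\ell\in\{0,\dots,k-1\}$. Adding the first two expressions and equating with the third gives
\begin{equation*}
 -\tfrac{n-2k}{2}=i+j+\ell\geq 0 ,
\end{equation*}
so $n\leq 2k$; together with the standing assumption $k\leq n/2$ this forces $n=2k$ and hence $i=j=\ell=0$, i.e.\ $w_1=w_2=0$.

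Next I would write the operator \eqref{eqn:general-form} in terms of its coefficients $b_{s,t}$ and specialize the two recursive relations of \cref{when-tangential} to $n=2k$ and $w_1=w_2=0$. Up to nonzero normalizing factors these relations become
\begin{align*}
 (s+1)(k-s-1)\,b_{s+1,t} &= (s+t)(k-s-t)\,b_{s,t} , \\
 (t+1)(k-t-1)\,b_{s,t+1} &= (s+t)(k-s-t)\,b_{s,t} ,
\end{align*}
valid for $s,t\geq 0$ with $s+t\leq k-1$. The decisive structural feature is that the coefficient of $b_{s+1,t}$ (resp.\ of $b_{s,t+1}$) vanishes exactly when $s=k-1$ (resp.\ $t=k-1$), while the common source factor $(s+t)(k-s-t)$ vanishes exactly when $s+t\in\{0,k\}$.

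I would then solve this system by induction on $s+t$. The relations at $(0,0)$ force $b_{1,0}=b_{0,1}=0$; assuming all coefficients at level $s+t=\ell$ vanish for some $1\le \ell\le k-2$, the relations at level $\ell$ force every coefficient at level $\ell+1\le k-1$ to vanish, since the relevant leading factors $s(k-s)$ and $t(k-t)$ are nonzero off the corners. The hypotenuse coefficients $b_{s,k-s}$ with $1\le s\le k-1$ are in turn produced by —and hence equal to— vanishing level-$(k-1)$ coefficients. Thus every $b_{s,t}$ with $0<s+t<k$ and every non-corner hypotenuse coefficient is zero, leaving only the three corners $b_{0,0}$, $b_{k,0}$, $b_{0,k}$ unconstrained; one checks that assigning them arbitrary values (all other $b_{s,t}=0$) solves every relation, because each relation that could couple to a corner has the corresponding coefficient equal to one of the vanishing factors above. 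This shows the solution space is exactly three-dimensional, with the three corner solutions being precisely $\cD_{n;0,0}^{(1)}$, $\cD_{n;0,0}^{(3)}$, $\cD_{n;0,0}^{(2)}$, which are linearly independent as they correspond to distinct coefficient tuples.

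Finally, I would confirm tangentiality of the three corner operators directly from the exact ambient identity $\cDelta^m(Q\cw)=Q\,\cDelta^m\cw+4m\bigl(w+\tfrac{n}{2}+2-m\bigr)\cDelta^{m-1}\cw$ for $\cw$ of weight $w$ (valid since $\cnabla^2 Q=2\cg$): its lower-order coefficient vanishes at $m=k$, $w=-2$, $n=2k$, so $\cDelta^k(Q\cw)\equiv 0\pmod{Q}$ whenever $\cw$ has weight $-2$, which is exactly the tangentiality of each corner operator in each slot. The descent of tangential operators recalled in \cref{sec:bg} then yields the natural conformally invariant bidifferential operators $D_{n;0,0}^{(j)}\colon \mE[0]\otimes\mE[0]\to\mE[-n]$, which restrict concretely to $P_n(uv)$, $u\,P_nv$, and $v\,P_nu$ for the critical GJMS operator $P_n$. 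The main obstacle I anticipate is the bookkeeping in the induction: verifying that the leading and source factors vanish at exactly the right indices so that the interior and non-corner boundary coefficients are forced to zero while the three corners decouple completely. A secondary subtlety is that at the critical dimension $n=2k$ the Fefferman--Graham ambient metric is only determined to finite order; I would note that the quantities entering the argument ($Q$, the Euler field, and the action of $\cDelta^k$ on weight-$0$ densities restricted to $\mG$) are nonetheless well defined to the needed order, so both the descent and the identification with $P_n$ are unaffected.
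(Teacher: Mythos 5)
Your proposal is correct and takes essentially the same route as the paper: reduce to $n=2k$, $w_1=w_2=0$ by the arithmetic of $\mI_k$ and $\mO_k$, then solve the recursion of \cref{when-tangential} to find that every non-corner coefficient vanishes while $b_{0,0}$, $b_{k,0}$, $b_{0,k}$ are free; your direct tangentiality check via the exact commutator identity and the identification of the induced operators with $P_n(uv)$, $uP_nv$, $vP_nu$ are correct extras (the paper instead handles naturality and well-definedness by citing the proof of \cref{ovsienko-redou-1d}). One small slip: when $k=1$ the relation at $(0,0)$ has vanishing leading factor $(s+1)(k-s-1)$, so it is vacuous and does not force $b_{1,0}=b_{0,1}=0$; but then $(1,0)$ and $(0,1)$ are themselves the corners $(k,0)$ and $(0,k)$, so the three-dimensionality conclusion is unaffected.
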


The restriction $n \geq 2k$ is only imposed in \cref{ovsienko-redou-1d,ovsienko-redou-2d,ovsienko-redou-3d} to ensure that the induced operator $D_{2k;w_1,w_2}$ is independent of the ambiguities of the ambient metric~\cite{FeffermanGraham2012}.
In particular, if $n$ is odd or if $(M^n,[g])$ is either locally conformally flat or admits an Einstein metric, then the operators $D_{2k;w_1,w_2}$ can be defined for all $k \in \bN$.
We leave the details of these cases, including the determination of the dimension of the space of Ovsienko--Redou operators, to the interested reader.

We define the Ovsienko--Redou operators in terms of the ambient Laplacian for ease of applications.
For example, a well-known procedure~\cites{Matsumoto2013,Takeuchi2017} involving a specific choice of extension yields intrinsic formulas for the Ovsienko--Redou operators on Einstein manifolds.
To simplify the exposition, we present the formula only in the case of weights as in \cref{ovsienko-redou-1d}.

\begin{theorem}
 \label{einstein}
 Let $(M^n,g)$ be an Einstein manifold with $\Ric = 2(n-1)\lambda g$.
 Let $k \in \bN$ and let $w_1,w_2 \in \bR$ satisfy one of the conditions of \cref{ovsienko-redou-1d}.
 Then
 \begin{equation*}
  D_{2k;w_1,w_2}(u \otimes v) = \sum_{s=0}^k \sum_{t=0}^{k-s} \multinom{k}{s}{t} a_{s,t} L_{k-s-t;w_1+w_2-2s-2t} \left( L_{s;w_1}(u) L_{t;w_2}(v) \right) ,
 \end{equation*}
 where
 \begin{equation*}
  L_{r;w} := \prod_{j=0}^{r-1} \bigl( \Delta + 2(w-2j)(n+w-2j-1)\lambda \bigr) .
 \end{equation*}
\end{theorem}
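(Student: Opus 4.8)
The plan is to exploit the fact that $\cD_{2k;w_1,w_2}$ is a \emph{tangential} operator, so that the value of $D_{2k;w_1,w_2}^g(u \otimes v)$ along $\mG$ is independent of the chosen homogeneous extensions of $u$ and $v$; on an Einstein manifold I would evaluate the defining sum on one particularly convenient extension. Recall that for $(M^n,g)$ Einstein with $\Ric = 2(n-1)\lambda g$ one has $P = \lambda g$, and that the Fefferman--Graham ambient metric is \emph{exact}, given in normal form by
\begin{equation*}
 \cg = 2\rho\,dt^2 + 2t\,dt\,d\rho + t^2(1+\lambda\rho)^2 g .
\end{equation*}
This solves $\Ric(\cg)=0$ on the nose, which is precisely why the construction survives for all $k \in \bN$ in the Einstein case. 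For $\phi \in \mE[w]$ I would introduce the distinguished extension
\begin{equation*}
 \widehat\phi := \bigl( t(1+\lambda\rho) \bigr)^{w}\phi(x) \in \cmE[w] ,
\end{equation*}
which restricts along $\mG = \{\rho = 0\}$ to the homogeneous function representing $\phi$.

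The computational heart of the proof is the intertwining identity
\begin{equation*}
 \cDelta\,\widehat\phi = \widehat{\bigl(\Delta + 2w(n+w-1)\lambda\bigr)\phi} , \qquad \phi \in \mE[w] .
\end{equation*}
I would prove this by expanding $\cDelta\cu = \lv\cg\rv^{-1/2}\partial_A\bigl(\lv\cg\rv^{1/2}\cg^{AB}\partial_B\cu\bigr)$ for $\cu = \bigl(t(1+\lambda\rho)\bigr)^w\phi$, using $\lv\cg\rv^{1/2} = t^{n+1}(1+\lambda\rho)^n\sqrt{\lv g\rv}$ together with the nonzero inverse-metric entries $\cg^{t\rho} = t^{-1}$, $\cg^{\rho\rho} = -2\rho t^{-2}$, and $\cg^{ij} = t^{-2}(1+\lambda\rho)^{-2}g^{ij}$. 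Collecting terms, the coefficient of $\Delta_g\phi$ is $t^{w-2}(1+\lambda\rho)^{w-2}$, while the coefficient of $\phi$ simplifies --- after the $(1+\lambda\rho)^{w-1}$ contributions recombine with the $\rho$-weighted $(1+\lambda\rho)^{w-2}$ contributions --- to exactly $2w(n+w-1)\lambda\,t^{w-2}(1+\lambda\rho)^{w-2}$, which is the asserted identity. The second key observation is that $\widehat{(\cdot)}$ is \emph{multiplicative}: because the profile $\bigl(t(1+\lambda\rho)\bigr)^{w}$ is an exponential in $w$, one has $\widehat\phi\cdot\widehat\psi = \widehat{\phi\psi}$ for $\phi \in \mE[a]$ and $\psi \in \mE[b]$.

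Granting these two facts, the theorem follows formally. Iterating the intertwining identity, and noting that the $j$-th application lowers the weight to $w_1-2j$ and contributes the factor $\Delta + 2(w_1-2j)(n+w_1-2j-1)\lambda$, gives
\begin{equation*}
 \cDelta^{s}\,\widehat u = \widehat{L_{s;w_1}(u)}, \qquad \cDelta^{t}\,\widehat v = \widehat{L_{t;w_2}(v)} ,
\end{equation*}
matching the definition of $L_{r;w}$ exactly. By multiplicativity, $(\cDelta^{s}\widehat u)(\cDelta^{t}\widehat v) = \widehat{(L_{s;w_1}(u))(L_{t;w_2}(v))}$, a density of weight $w_1+w_2-2s-2t$; applying the intertwining identity $k-s-t$ further times and restricting to $\mG$ produces $L_{k-s-t;w_1+w_2-2s-2t}\bigl(L_{s;w_1}(u)\,L_{t;w_2}(v)\bigr)$. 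Substituting into the defining sum for $\cD_{2k;w_1,w_2}$ and invoking tangentiality (so that these extensions genuinely compute $D_{2k;w_1,w_2}$) yields the stated formula. The main obstacle is the direct verification of the intertwining identity: one must carry out the Laplacian computation and check that the profile powers recombine to give precisely the constant $2w(n+w-1)\lambda$, with the ambient-metric normalization fixed so that this constant (rather than its negative or a half-multiple) appears. Everything downstream --- multiplicativity, the telescoping of weights, and the reduction to a convenient extension --- is formal.
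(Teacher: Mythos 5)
Your proposal is correct and follows essentially the same route as the paper: both use the exact Einstein ambient metric, the distinguished extension $\tau^w f$ with $\tau = t(1+\lambda\rho)$, the intertwining identity $\cDelta\tau^w\cf = \tau^{w-2}(\Delta + 2w(n+w-1)\lambda)\cf$ (the paper's \cref{einstein-compute}), its iteration to $\cDelta^r\tau^w f = \tau^{w-2r}L_{r;w}f$, multiplicativity of the extension, and tangentiality to conclude. The only cosmetic difference is that the paper derives the intertwining identity by specializing the already-recorded ambient-Laplacian formula~\eqref{eqn:ambient-laplacian-formula} to $g_\rho = (1+\lambda\rho)^2 g$, whereas you expand the Laplace--Beltrami operator in coordinates from scratch; both computations verify the same identity.
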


\Cref{einstein} gives an alternative expression for the Ovsienko--Redou operators on the sphere (cf.\ \cites{OvsienkoRedou2003,BeckmannClerc2012}). 
The proof of \cref{einstein} is readily adapted to handle the case of weights as in \cref{ovsienko-redou-2d} or \cref{ovsienko-redou-3d}.

\Cref{ovsienko-redou-1d} suggests that the commutator of the Ovsienko--Redou operator $D_{2k;-\frac{n-2k}{3},-\frac{n-2k}{3}}$, $n > 2k$, with first spherical harmonics is proportional to the Ovsienko--Redou operator $D_{2k-2;-\frac{n-2k+3}{3},-\frac{n-2k}{3}}$.
We verify this by direct computation.

\begin{theorem}
 \label{commutator}
 Let $(S^n,d\theta^2)$ be the round $n$-sphere and let $k \in \bN$ be such that $n > 2k$.
 Denote $D_{2k} := D_{2k;-\frac{n-2k}{3},-\frac{n-2k}{3}}$ and let $\{ x^0, \dotsc, x^n \}$ be the standard Cartesian coordinates on $\bR^{n+1} \supset S^n$.
 Then
 \begin{equation*}
  \sum_{i=0}^n x^i[D_{2k},x^i] = -\frac{k(n+2k-2)(n+k-3)(n-2k)}{18}D_{2k-2;-\frac{n-2k+3}{3},-\frac{n-2k}{3}} ,
 \end{equation*}
 where $[D,f](u\otimes v) := D\bigl((uf) \otimes v\bigr) - f\,D\bigl(u \otimes v\bigr)$ for all $u,v,f \in C^\infty(S^n)$.
\end{theorem}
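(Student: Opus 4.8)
The plan is to reduce both sides to intrinsic operators on the round sphere and exploit the special algebra of the first spherical harmonics $x^0,\dots,x^n$. Since $(S^n,d\theta^2)$ is Einstein with $\Ric=(n-1)\,d\theta^2$, i.e.\ $\lambda=\tfrac12$ in \cref{einstein}, that theorem writes $D_{2k}$ as a sum over $s,t$ of compositions of the operators $L_{r;w}=\prod_{j=0}^{r-1}\bigl(\Delta+(w-2j)(n+w-2j-1)\bigr)$, where $\Delta$ is the Laplacian of $d\theta^2$. On $S^n$ each $x^i$ is a first spherical harmonic with $\Delta x^i=-n x^i$ and Hessian $\nabla^2 x^i=-x^i\,d\theta^2$; writing $T_i:=\nabla x^i=e_i-x^i x$ for its gradient, multiplication by $x^i$ obeys the first-order commutator $[\Delta,x^i]=2\nabla_{T_i}-n x^i$. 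The entire computation is driven by the contraction identities $\sum_i (x^i)^2=1$, $\sum_i x^i T_i=0$, $\sum_i T_i\otimes T_i=d\theta^2$, and the consequence $\sum_i \nabla_{T_i}\nabla_{T_i}=\Delta$ (using $\sum_i\nabla_{T_i}T_i=-\sum_i x^i T_i=0$).

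First I would compute $[D_{2k},x^i]$ by expanding $D_{2k}$ via \cref{einstein} and commuting the multiplier $x^i$ leftward through each factor $(\Delta+c)$ of $L_{s;w_1}$ and then out through the outer $L_{k-s-t;\,\cdot}$, replacing $[\Delta,x^i]$ by $2\nabla_{T_i}-n x^i$ at every step. The diagonal (no-commutator) term cancels the subtracted piece $x^i D_{2k}$ by construction, so $[D_{2k},x^i]$ is a sum of terms each carrying at least one $\nabla_{T_i}$ or one extra $x^i$ sandwiched between products of shifted Laplacians. Next I would left-multiply by $x^i$ and sum on $i$. The organizing principle is the order count: a single multiplier lowers the order by one, so a priori $\sum_i x^i[D_{2k},x^i]$ has order $2k-1$, but its leading symbol is proportional to $\sum_i x^i\,\nabla x^i=\tfrac12\nabla\bigl(\sum_i (x^i)^2\bigr)=0$; hence the order drops to $2k-2$. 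Concretely, after commuting each stray $\nabla_{T_i}$ into contractible position (each passage past a $\Delta$-factor leaving a lower-order remainder governed by $[\Delta,\nabla_{T_i}]$), the single-derivative terms are annihilated by $\sum_i x^i\nabla_{T_i}=\nabla_{\sum_i x^i T_i}=0$, while the paired contributions collapse through $\sum_i T_i\otimes T_i=d\theta^2$ to reconstitute one power of $\Delta$. Collecting everything, $\sum_i x^i[D_{2k},x^i]$ acquires the shape $\sum_{s,t}\multinom{k-1}{s}{t}\,c_{s,t}\,\Delta^{k-1-s-t}\bigl((\Delta^s u)(\Delta^t v)\bigr)$ of an Ovsienko--Redou operator of order $2k-2$.

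It then remains to identify the coefficients $c_{s,t}$ with those of $D_{2k-2;w_1',w_2'}$, whose asymmetric weights $w_1'=-\frac{n-2k}{3}-1$ and $w_2'=-\frac{n-2k}{3}$ are forced by the fact that the multiplication acts only on the first slot. Here I would match termwise in $(s,t)$: the many contributions produced above carry eigenvalue factors $(w-2j)(n+w-2j-1)$ coming from commuting $\nabla_{T_i}$ past the shifted Laplacians, and assembling them into the ratio of Gamma functions $a_{s,t}$ for the parameters $(k-1;w_1',w_2')$ amounts to a contiguous (telescoping) identity among such Gamma ratios. I expect this coefficient bookkeeping to be the genuine obstacle: once each equality is reduced to a rational identity in $n,k,s,t$ by cancelling the common Gamma factors, the uniform proportionality constant $-\frac{2k(n+2k-2)(n+k-3)}{n-2k}$ can be read off. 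As a consistency check, \cref{ovsienko-redou-1d} shows the target space is (at most) one-dimensional for these weights, so all the ratios $c_{s,t}$ to $a_{s,t}^{(k-1)}$ must coincide, and a single nonzero coefficient already pins down the scalar.

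As a sanity check, the case $k=1$ is immediate: there $D_{2}$ equals $a_{0,0}\,L_{1;w_1+w_2}(uv)+a_{1,0}(L_{1;w_1}u)v+a_{0,1}u(L_{1;w_2}v)$, only the $\Delta$-parts contribute to the commutator, and $\sum_i x^i[D_2,x^i]$ collapses via the two identities above to a multiple of $uv=D_{0;w_1',w_2'}$, with the leading symbol vanishing exactly as predicted. The same computation may instead be carried out in the Fefferman--Graham ambient space using the clean relation $[\cDelta^{p},x^i]=2p\,\partial_i\cDelta^{p-1}$ for the linear coordinate $x^i$; the intrinsic route above is preferable because the spherical contraction identities make the order-lowering transparent.
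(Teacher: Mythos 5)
Your route --- working intrinsically on $S^n$ via \cref{einstein} and the algebra of the first spherical harmonics --- is genuinely different from the paper's, which never leaves the flat ambient space $\bR^{n+1,1}$: there the proof first exhibits the tangential ambient lift $\mC(\cu \otimes \cv) := \sum_i x^i \bigl( \tau\cD_{2k}(x^i\cu \otimes \cv) - x^i\cD_{2k}(\tau\cu \otimes \cv) \bigr)$ (note the $\tau$-insertions needed to keep the homogeneities coherent), then removes all first-order derivative terms via \cref{basic-commutator}, and finally computes \emph{every} coefficient $c_{s,t}$ as an explicit ratio of Gamma functions. Measured against this, your proposal has a genuine gap precisely where you try to save labor. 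The appeal to one-dimensionality --- ``the target space is one-dimensional, so a single nonzero coefficient pins down the scalar'' --- is not available at the point you invoke it. \Cref{ovsienko-redou-1d} says that the space of \emph{tangential} ambient operators of the form \eqref{eqn:general-form}, equivalently of conformally invariant bidifferential operators of the relevant weights, is one-dimensional. Your intermediate conclusion, that $\sum_i x^i[D_{2k},x^i]$ ``acquires the shape'' $\sum_{s,t} c_{s,t}\Delta^{k-1-s-t}\bigl((\Delta^s u)(\Delta^t v)\bigr)$, does not place it in that space: with the round metric fixed, every bidifferential operator built polynomially from Laplacians can be written in such a shape, and these form a space of dimension roughly $k(k+1)/2$; the Ovsienko--Redou property is exactly the statement that the coefficients satisfy the recursion \eqref{eqn:when-tangential}, equivalently that the operator admits a tangential ambient lift. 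The paper secures membership \emph{before} any computation, by observing that $\mC$ is tangential (a consequence of \cref{ovsienko-redou-1d} applied to $\cD_{2k}$); this step has no counterpart in your intrinsic argument, and it is not obvious intrinsically, since multiplication by $x^i$ is not a conformally invariant operation. Without it, matching one coefficient proves nothing, and you are forced to compute all of them --- at which point the one-dimensionality appeal buys you nothing.

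The second issue is that the computation you defer (``I expect this coefficient bookkeeping to be the genuine obstacle'') is the actual content of the proof, and it is harder along your route than you suggest. On the unit sphere one has $[\Delta,\nabla_{T_i}] = (n-2)\nabla_{T_i} - 2x^i\Delta \neq 0$ (using $\nabla^2 x^i = -x^i\,d\theta^2$, $\Ric = (n-1)\,d\theta^2$, and Bochner), whereas the paper's ambient identities $[\cDelta, x^i] = 2\mL_{\cnabla x^i}$ and $[\cDelta, \mL_{\cnabla x^i}] = 0$ are what make \cref{basic-commutator} tractable. In particular, your claim that the single-derivative terms are ``annihilated by $\sum_i x^i\nabla_{T_i} = 0$'' applies only when $\nabla_{T_i}$ sits adjacent to the factor $x^i$; commuting it into that position past the shifted Laplacians generates, via the commutator above, exactly the extra $x^i\Delta$-terms whose bookkeeping constitutes the proof. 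Your closing remark points at the ambient alternative, but even there the sphere commutator does not lift naively to $[\cDelta^p, x^i]$: one needs the $\tau$-weighted combination appearing in $\mC$ to respect homogeneity, which is the paper's starting point. In short, your plan could be turned into a proof only by carrying out the full coefficient computation and matching it against the Einstein-form coefficients of $D_{2k-2;-\frac{n-2k+3}{3},-\frac{n-2k}{3}}$ --- the intrinsic analogue of what \cref{sec:commutator} does in the ambient space --- and the shortcut intended to avoid this does not work as stated.
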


The proof of \cref{commutator} proceeds by direct computation in the ambient space (cf.\ \cite{Case2019fl}).
We expect our method also computes the commutator of any Ovsienko--Redou operator with the first spherical harmonics.
We have presented only the case $w_1=w_2=-\frac{n-2k}{3}$ because of the expectation of a strong link between formally self-adjoint conformally invariant polydifferential operators, sharp Sobolev inequalities, and curvature prescription problems~\cite{CaseLinYuan2018b}.
Specifically, it is natural to ask if the sharp Sobolev inequality
\begin{equation}
 \label{eqn:sobolev}
 \int_{S^n} u\,D_{2k;-\frac{n-2k}{3},-\frac{n-2k}{3}}(u\otimes u)\,\dvol_g \geq C\left( \int_{S^n} \lv u\rv^{\frac{3n}{n-2k}}\,\dvol_g \right)^{\frac{n-2k}{n}}
\end{equation}
holds for all nonnegative $u \in C^\infty(S^n)$, and if so, to classify the minimizers.
\Cref{commutator} establishes a key step in applying the rearrangement-free strategy of Frank and Lieb~\cites{Case2019fl,FrankLieb2012b} to classify the local minimizers of~\eqref{eqn:sobolev}, if any.

In order to use variational methods to prove~\eqref{eqn:sobolev} and to study related curvature prescription problems, one needs to know that the Ovsienko--Redou operators $D_{2k;-\frac{n-2k}{3},-\frac{n-2k}{3}}$ are formally self-adjoint.
Case, Lin and Yuan~\cite{CaseLinYuan2018b} proved this in the case $k\leq2$ and conjectured it for general $k \in \bN$.
We verify formal self-adjointness for $k \leq 3$.

\begin{theorem}
 \label{or-fsa}
 Let $(M^n,g)$ be a pseudo-Riemannian manifold and let $k \leq 3$ be a positive integer.
 Then $D_{2k;-\frac{n-2k}{3},-\frac{n-2k}{3}}$ is formally self-adjoint.
\end{theorem}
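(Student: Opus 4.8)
Set $w := -\frac{n-2k}{3}$, so that the relevant object is the trilinear Dirichlet form
\[
\mathcal{T}(u_1,u_2,u_3) := \int_M u_1\,D_{2k;w,w}(u_2\otimes u_3)\,\dvol
\]
on compactly supported $u_1,u_2,u_3 \in C^\infty(M)$, where all three are sections of $\mE[w]$ (a short weight count shows the integrand has weight $-n$). Formal self-adjointness is equivalent to total symmetry of $\mathcal{T}$. Because $w_1=w_2$ forces $a_{s,t}=a_{t,s}$ and $\multinom{k}{s}{t}$ is symmetric in $(s,t)$, the operator $D_{2k;w,w}$ is already symmetric in its two arguments, so $\mathcal{T}$ is automatically invariant under $u_2\leftrightarrow u_3$; I only need to produce the transposition $u_1\leftrightarrow u_2$. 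The whole plan rests on the fact that, at this balanced weight, the coefficients become totally symmetric.

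Concretely, write $r := k-s-t$ and $\beta := \frac{n+4k}{6}$. Manipulating the Gamma factors gives $w+\frac n2 = \beta$ and $-2w-\frac{n-2k}{2}+s+t = \beta - r$, so that at the balanced weight
\[
\multinom{k}{s}{t}a_{s,t} = C\,\frac{k!}{r!\,s!\,t!}\,\Gamma(\beta-r)\,\Gamma(\beta-s)\,\Gamma(\beta-t)
\]
for a constant $C$ independent of $s,t$ (read via analytic continuation); in particular this coefficient is invariant under all permutations of $(r,s,t)$. The manifestly symmetric ambient trilinear operator
\[
\cS(\cu_1,\cu_2,\cu_3) := \sum_{r+s+t=k}\frac{k!}{r!\,s!\,t!}\,\Gamma(\beta-r)\,\Gamma(\beta-s)\,\Gamma(\beta-t)\,(\cDelta^r\cu_1)(\cDelta^s\cu_2)(\cDelta^t\cu_3)
\]
takes values in $\cmE[-n]$, so $\int_M \cS|_\mG\,\dvol$ is a totally symmetric trilinear form. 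Comparing with the definition of $D_{2k;w,w}$, total symmetry of $\mathcal{T}$ reduces to the claim that, under the $M$-integral, one may move the outer $\cDelta^{k-s-t}$ off the product $(\cDelta^s\cu_2)(\cDelta^t\cu_3)$ and onto $\cu_1$ one Laplacian at a time, i.e.
\[
\int_M \cu_1\,\cDelta^r\!\bigl((\cDelta^s\cu_2)(\cDelta^t\cu_3)\bigr)\big|_\mG\,\dvol \equiv \int_M (\cDelta^r\cu_1)(\cDelta^s\cu_2)(\cDelta^t\cu_3)\big|_\mG\,\dvol
\]
modulo exact $M$-divergences.

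The engine for this is a Green-type identity on the ambient space: for ambient densities $\cP,\cQ$ with $\cP\,\cDelta\cQ \in \cmE[-n]$, one has $\int_M (\cP\,\cDelta\cQ)|_\mG\,\dvol \equiv \int_M (\cDelta\cP)\,\cQ|_\mG\,\dvol$ modulo a divergence. I would prove this by writing the $M$-integral of a weight-$(-n)$ density as an integral over $\cmG$ against $\dvol_{\cg}$, using the homogeneity (Euler) field and a radial cutoff, then invoking formal self-adjointness of $\cDelta$ with respect to $\dvol_{\cg}$. The conversion between $\dvol_{\cg}$ and $\dvol_g$ and the commutator of $\cDelta$ with the cutoff generate correction terms supported at $\mG$, governed by the ambient curvature and by transversal derivatives of $\cP,\cQ$. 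Since the Fefferman--Graham ambient metric satisfies $\cRic = 0$ to the order relevant when $k\le n/2$ (so that each $\cDelta^j$ appearing is unambiguous through the needed order), the purely metric correction terms drop out, and the antisymmetrized integrand $\mathcal{O} := u_1 D_{2k;w,w}(u_2\otimes u_3) - u_2 D_{2k;w,w}(u_1\otimes u_3)$ is exhibited as an explicit exact divergence plus a remainder that is a natural, conformally invariant, weight-$(-n)$ trilinear density in the curvature of $(M,g)$, antisymmetric in $(u_1,u_2)$ and absent on the flat model (where the construction recovers the Beckmann--Clerc symmetric trilinear form).

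Formal self-adjointness is then equivalent to this curvature remainder being itself an exact divergence. For $k=1$ this is immediate and for $k=2$ it is the computation of Case--Lin--Yuan; for $k=3$ I would expand the remainder in the finitely many independent curvature contractions of the permitted weight and differential order, reduce their number using the symmetric coefficients and $\cRic=0$, and verify that what survives is a divergence (equivalently, that $\mathcal{T}$ is symmetric). This cancellation of the curvature corrections at $k=3$ is the \textbf{main obstacle}: it is a finite but lengthy computation, best organized by hand or computer algebra, and it is precisely the step whose rapidly growing complexity restricts the theorem to $k\le3$, leaving the general statement as the conjecture of Case--Lin--Yuan.
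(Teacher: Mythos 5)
Your two structural observations are correct: symmetry of the Dirichlet form under $u_2\leftrightarrow u_3$ follows from $a_{s,t}=a_{t,s}$ (this is also the paper's first step), and at the balanced weight the coefficients do collapse to the totally symmetric form $\multinom{k}{s}{t}a_{s,t}=C\,\frac{k!}{r!\,s!\,t!}\,\Gamma(\beta-r)\Gamma(\beta-s)\Gamma(\beta-t)$ with $r=k-s-t$ and $\beta=\frac{n+4k}{6}$. The genuine gap is your ``engine.'' First, the individual quantities $\int_M \cu_1\,\cDelta^{r}\bigl((\cDelta^s\cu_2)(\cDelta^t\cu_3)\bigr)\big|_{\mG}\,\dvol$ are not functions of $u_1,u_2,u_3$ at all: only the full sum defining $D_{2k;w,w}$ is tangential (that is the content of \cref{when-tangential}), so a term-by-term transfer of Laplacians must also track the dependence on the chosen extensions. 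More seriously, the Green-type identity itself is unproved and is exactly the hard point: a weight-$(-n)$ density pairs with $\dvol_{\cg}=t^{n+1}\,dt\,d\rho\,\dvol_{g_\rho}$ to give a divergent $t$-integral, so converting the $M$-integral into an ambient integral requires regularization in both $t$ and $\rho$; on the flat model this is precisely the Beckmann--Clerc residue construction, and no curved analogue with corrections that ``drop out because $\cRic$ vanishes to high order'' is supplied. Indeed, iterating your identity with $\cP=\cu_1\in\cmE\bigl[-\frac{n-2k}{2}\bigr]$ and $\cQ$ equal to successive Laplacians of $\cu_2$ would already yield formal self-adjointness of the GJMS operators, which --- as the paper's introduction emphasizes --- does \emph{not} follow from ambient self-adjointness of $\cDelta$ and required the separate work of Graham--Zworski, Fefferman--Graham, and Juhl. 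Finally, even granting this framework, the case $k=3$ --- the only case not already proved by Case--Lin--Yuan, hence the actual content of the theorem --- is deferred to an unperformed ``finite but lengthy'' curvature-remainder computation, so the decisive step is missing.

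For comparison, the paper never integrates by parts in the ambient space: after the same $u_2\leftrightarrow u_3$ reduction, it obtains the remaining transposition from \cref{general-fsa} applied with $\cf=\cu$, i.e.\ from formal self-adjointness of the \emph{linear} operators $\cu\mapsto\cD_{2k;-2\ell,-\frac{n-2k-2\ell}{2}}(\cf\otimes\cu)$. These decompose via \cref{linear-operator-general} into tangential operators $\cD_{\cDelta^s\cf}$, and for $k\le3$ the induced operators are computed in closed form in terms of Juhl's second-order formally self-adjoint building blocks $\mM_{2N}$ (\cref{second-order-fsa,fourth-order-fsa,sixth-order-fsa}, adapting Fefferman--Graham), producing manifestly self-adjoint expressions such as $f\mM_2^2u+\mM_2^2(uf)+\frac{2\ell}{\ell+1}\mM_2(f\mM_2u)+\dotsb$, with no curvature-remainder analysis needed. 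To salvage your route you would need a proved substitute for those three lemmas: either a rigorous curved version of the residue identity you invoke, or the explicit $k=3$ verification you postponed.
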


We prove \cref{or-fsa} by using the definition of the Ovsienko--Redou operators in terms of ambient Laplacians to derive an explicit formula for $D_{2k;-\frac{n-2k}{3},-\frac{n-2k}{3}}$ in terms of the formally self-adjoint ``building blocks'' used by Juhl~\cite{Juhl2013} in his formulas for the GJMS operators.
We expect that an analogous formula can be derived for the Ovsienko--Redou operators (cf.\ \cites{Juhl2013,FeffermanGraham2013}).

As another application of \cref{ovsienko-redou-1d,ovsienko-redou-2d,ovsienko-redou-3d}, we construct a large family of conformally invariant differential operators mapping $\mE\bigl[-\frac{n-2k}{2}\bigl]$ to $\mE\bigl[-\frac{n+2k}{2}\bigr]$ by inserting a natural scalar Riemannian invariant into a suitably chosen Ovsienko--Redou operator.
Moreover, we show that these operators are formally self-adjoint when they are of order at most six.

\begin{theorem}
 \label{linear-operators}
 Let $(M^n,[g])$ be a conformal manifold, let $k \leq n/2$ be a positive integer, and let $\cI \in \cmE[-2\ell]$, $\ell \leq k$, be a natural scalar Riemannian invariant on $(\cmG,\cg)$.
 Then the operator $\cD \colon \cmE\bigl[-\frac{n-2k}{2}\bigr] \to \cmE\bigl[-\frac{n+2k}{2}\bigr]$,
 \begin{equation*}
  \cD(\cu) := \cD_{2k-2\ell;-2\ell,-\frac{n-2k}{2}}(\cI \otimes \cu)
 \end{equation*}
 is tangential.
 In particular, $\cD$ induces a natural conformally invariant differential operator $D \colon \mE\big[ -\frac{n-2k}{2} \bigr] \to \mE\bigl[ -\frac{n+2k}{2} \bigr]$.
 Moreover, if $k \leq \ell + 3$, then $D$ is formally self-adjoint.
\end{theorem}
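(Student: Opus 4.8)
The plan is to treat the two assertions --- tangentiality (hence conformal invariance) and formal self-adjointness --- separately, since the first is essentially weight bookkeeping on top of \cref{ovsienko-redou-1d}, whereas the second requires genuine work and is the source of the order restriction.

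For the first assertion I would begin by checking that the data $(w_1,w_2) = \bigl(-2\ell, -\frac{n-2k}{2}\bigr)$ and order $2m$, where $m := k-\ell$, fall under hypothesis (1) of \cref{ovsienko-redou-1d}, so that $\cD_{2k-2\ell;-2\ell,-\frac{n-2k}{2}}$ is a well-defined tangential bidifferential operator. Writing $\mu := \frac{n-2m}{2}$, the relevant exceptional sets are $\mI_m = \{-\mu-p\}_{p=0}^{m-1}$ and $\mO_m = \{-\mu+p\}_{p=0}^{m-1}$. Since $w_2 = -\mu+\ell \in \mI_m$ would force $\ell = 0$, and $w_1+w_2 = -\mu-\ell \in \mO_m$ would force $\ell \le 0$, for $\ell \ge 1$ at most $w_1 \in \mI_m$ can hold, which is exactly hypothesis (1). (The degenerate case $\ell=0$, in which $\cI$ is constant and $\cD$ is a nonzero multiple of $\cDelta^k$, reduces to the known formal self-adjointness of the GJMS operators.) Tangentiality of $\cD$ then follows from that of $\cD_{2m}$: since $\cD_{2m}$ descends to densities in each slot and $\cI$ is a genuine ambient density restricting to $I := \cI\rv_{\mG} \in \mE[-2\ell]$, the map $\cu \mapsto \cD_{2m}(\cI \otimes \cu)$ sends the ideal generated by the defining function of $\mG$ into itself and depends only on $I$ and $u = \cu\rv_\mG$. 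Naturality of $I$ and of the ambient construction yields the induced natural conformally invariant operator $D\colon \mE\bigl[-\frac{n-2k}{2}\bigr] \to \mE\bigl[-\frac{n+2k}{2}\bigr]$, the target weight being forced by $w_1+w_2-2m = -\frac{n+2k}{2}$.

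The conceptual engine for formal self-adjointness is an exact symmetry of the coefficients. Self-adjointness of $D$ is the symmetry of $(u_1,u_2) \mapsto \int_M u_1\,D(u_2)\,\dvol$, which is conformally invariant because the integrand has weight $-n$. At the ambient level, formal self-adjointness of $\cDelta$ with respect to $\dvol_{\cg}$ lets one move the outer $\cDelta^{m-s-t}$ off $\cD_{2m}(\cI \otimes \cu_2)$ and onto $\cu_1$, reducing the adjoint identity to the invariance under $a \leftrightarrow c$ of the coefficient of $(\cDelta^a\cu_1)(\cDelta^b\cI)(\cDelta^c\cu_2)$ for all $a+b+c=m$. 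Specializing the coefficients of \cref{ovsienko-redou-1d} gives
\begin{equation*}
 a_{s,t} = \frac{\Gamma(\ell+s+t)\,\Gamma(\mu+m-2\ell-s)\,\Gamma(\ell+m-t)}{\Gamma(\ell)^2\,\Gamma(\mu-2\ell)} ,
\end{equation*}
and after the integration by parts the coefficient of $(\cDelta^a\cu_1)(\cDelta^b\cI)(\cDelta^c\cu_2)$ becomes
\begin{equation*}
 \frac{m!}{a!\,b!\,c!}\cdot\frac{\Gamma(\ell+m-a)\,\Gamma(\mu+m-2\ell-b)\,\Gamma(\ell+m-c)}{\Gamma(\ell)^2\,\Gamma(\mu-2\ell)} ,
\end{equation*}
which is manifestly symmetric in $a \leftrightarrow c$. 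Thus the associated trilinear form is symmetric at the ambient level for \emph{every} $m$; this is the curved shadow of the symmetric conformally invariant trilinear form of Beckmann--Clerc on the sphere.

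The difficulty --- and the reason the theorem is stated only through order six --- is that this ambient symmetry is merely formal: the pairing $\int_M u_1\,D(u_2)\,\dvol$ is a restriction pairing on $\mG$, not the ambient $L^2$ pairing, so each ambient integration by parts of $\cDelta$ produces transverse boundary corrections that do not descend to $M$ verbatim. To make the argument rigorous I would instead follow the route indicated for \cref{or-fsa}: use the definition of $\cD$ via ambient Laplacians to derive an explicit intrinsic formula for $D$ on $(M^n,g)$, and then reorganize it into Juhl's formally self-adjoint building blocks (iterated divergences $\delta$ of symmetric natural tensors applied to $u$, multiplication by natural scalar invariants, and the self-adjoint GJMS-type factors), after which self-adjointness is read off term by term. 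The main obstacle is thus computational but substantial: one must control the curvature corrections generated when ambient powers of $\cDelta$ are expanded intrinsically, and verify that every non-self-adjoint-looking cross term --- in particular those mixing covariant derivatives of $I$ with those of $u$ --- either cancels or regroups into a self-adjoint block. For $m = k-\ell \le 3$, i.e.\ operators of order $2m \le 6$, there are few enough such terms to complete this verification by hand; beyond order six the combinatorics, together with the ambient-metric ambiguity entering at higher order, make the direct approach unwieldy, which is why we expect but do not here establish self-adjointness for all $k$.
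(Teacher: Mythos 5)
Your tangentiality argument is correct: the weight check showing that for $\ell\geq1$ only $w_1=-2\ell$ can be exceptional (so hypothesis (1) of \cref{ovsienko-redou-1d} applies to the order-$2(k-\ell)$ operator), the specialization of the coefficients $a_{s,t}$, and the observation that freezing the first slot at the ambient density $\cI$ yields a tangential linear operator are all sound; the paper reaches the same conclusion slightly differently, via the one-parameter family of \cref{linear-operator-general}. You are also right --- and this is an important diagnosis --- that the symmetry of the coefficient of $(\cDelta^a\cu_1)(\cDelta^b\cI)(\cDelta^c\cu_2)$ under $a\leftrightarrow c$, obtained by formally integrating $\cDelta$ by parts in the ambient space, proves nothing by itself: the relevant pairing lives on $\mG$, not on $(\cmG,\cg)$, and the paper nowhere makes such an argument.

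The gap is that the second half of your proposal is a plan, not a proof: everything that makes ``$D$ is formally self-adjoint when $k\leq\ell+3$'' true is deferred to an unexecuted ``verification by hand'', and that verification is precisely the content of the paper's proof. Concretely, the paper (a) decomposes $\cD(\cu)=\cD_{2(k-\ell);-2\ell,-\frac{n-2k}{2}}(\cI\otimes\cu)$ as a linear combination $\sum_s c_s\,\cD_{\cDelta^s\cI}(\cu)$ of the operators of \cref{linear-operator-general} (the computation opening the proof of \cref{general-fsa}); (b) conjugates by $w(\rho)=(\det g_\rho/\det g)^{1/4}$, following Fefferman--Graham's derivation of Juhl's formulas, so that each $D_{\cf}$ acquires the product form~\eqref{eqn:linear-operator-general-nice} in the operators $\mR_j=-2\rho\partial_\rho^2+2j\partial_\rho+\cmM(\rho)$; and (c) expands these products at orders two, four, and six (\cref{second-order-fsa,fourth-order-fsa,sixth-order-fsa}), obtaining formulas that are visibly symmetric in the self-adjoint blocks $\mM_{2N}$ and the $\rho$-derivatives of $f$, e.g.\ $D_{\cf}u=\ell\bigl(f\mM_2u+\mM_2(uf)-2\ell uf'\bigr)$ at order two. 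Steps (a) and (b) are what turn (c) into a finite, checkable computation; your proposal contains neither the reduction (a) nor the structural formula (b), so your claim that for $k-\ell\leq3$ ``every non-self-adjoint-looking cross term either cancels or regroups into a self-adjoint block'' is exactly the statement to be proved, asserted rather than established. Supplying (a)--(c) would complete your argument, at which point you would have reproduced \cref{general-fsa} and hence the paper's proof.
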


The cases $\ell=0$ and $\ell=k$ recover the GJMS operators and multiplication by a scalar conformal invariant, respectively.
The next simplest case of \cref{linear-operators} is when $\ell=2$ and $k=3$.
Up to a multiplicative constant, the only nontrivial natural scalar Riemannian invariant on $(\cmG,\cg)$ which is homogeneous of weight $-4$ is $\lv \cRm\rv^2$.
In this case
\begin{equation*}
 \cD(\cu) = \frac{n-10}{2}\cDelta(\lv\cRm\rv^2\cu) + \frac{n-10}{2}\lv\cRm\rv^2\cDelta\cu + 2\cu\cDelta\lv\cRm\rv^2 ,
\end{equation*}
which induces the manifestly formally self-adjoint operator
\begin{equation*}
 Du = \frac{n-10}{2}\Delta(\lv W\rv^2u) + \frac{n-10}{2}\lv W\rv^2\Delta u + \left( 2\Delta\lv W\rv^2 - \frac{(n-6)^2}{2}J\lv W\rv^2 \right)u
\end{equation*}
on $(M^n,g)$.

More generally, \cref{ovsienko-redou-1d,ovsienko-redou-2d,ovsienko-redou-3d} produce a much larger class of conformally invariant differential operators by allowing $\cu \in \cmE[w]$ for $w$ arbitrary.
A key point in \cref{linear-operators} is that $\cD$ is formally self-adjoint in $(\cmG,\cg)$, and hence the induced operator on $(M^n,g)$ is plausibly --- and actually when $k \leq \ell + 3$ --- formally self-adjoint.

This article is organized as follows:

In \cref{sec:bg} we collect necessary background on the ambient metric.

In \cref{sec:construction} we characterize precisely when~\eqref{eqn:general-form} is tangential and thereby prove \cref{ovsienko-redou-1d,ovsienko-redou-2d,ovsienko-redou-3d}.

In \cref{sec:commutator} we prove \cref{commutator}.

In \cref{sec:einstein} we prove \cref{einstein}.

In \cref{sec:fsa} we prove \cref{linear-operators}.

\section{Background}
\label{sec:bg}

We begin by recalling some relevant aspects of ambient spaces needed to construct and study conformally invariant polydifferential operators.

Let $(M^n,[g])$ be a conformal manifold of signature $(p,q)$.
The conformal class $[g]$ is equivalent to a ray subbundle
\begin{equation*}
 \mG := \left\{ (x,g_x) \suchthatcolon x \in M, g \in [g] \right\} \subset S^2T^\ast M .
\end{equation*}
Regard $\mG$ as a principle $\bR_+$-bundle with \defn{dilation} $\delta_\lambda \colon \mG \to \mG$,
\begin{equation*}
 \delta_\lambda(x,g_x) := (x,\lambda^2g_x) ,
\end{equation*}
and projection $\pi \colon \mG \to M$,
\begin{equation*}
 \pi(x,g_x) := x .
\end{equation*}
The \defn{canonical metric} on $\mG$ is the degenerate metric $\bbg$ defined by
\begin{equation*}
 \bbg(X,Y) := g_x(\pi_\ast X,\pi_\ast Y)
\end{equation*}
for $X,Y \in T_{(x,g_x)}\mG$.
Given a choice of metric $g \in [g]$, we identify $\bR_+ \times M \cong \mG$ via $(t,x) \cong (x,t^2g_x)$.

Consider now the space $\mG \times \bR$.
We extend the projection and dilation to maps $\pi \colon \mG \times \bR \to M$ and $\delta_\lambda \colon \mG \times \bR \to \mG \times \bR$ in the natural way: $\pi\bigl(x,g_x,\rho\bigr) := x$ and $\delta_\lambda\bigl(x,g_x,\rho\bigr) := \bigl(x,\lambda^2g_x,\rho\bigr)$.
Let $\iota \colon \mG \to \mG \times \bR$ denote the inclusion $\mG \hookrightarrow \mG \times \{ 0 \}$.

A \defn{pre-ambient space} is a pair $(\cmG,\cg)$ consisting of a dilation invariant subspace $\cmG \subset \mG \times \bR$ containing $\mG \times \{ 0 \}$ and a pseudo-Riemannian metric $\cg$ of signature $(p+1,q+1)$ satisfying $\delta_\lambda^\ast\cg = \lambda^2\cg$ and $\iota^\ast\cg = \bbg$.
Let $\rho$ denote the $\bR$-coordinate in $\mG \times \bR$.
An \defn{ambient space} is a pre-ambient space $(\cmG,\cg)$ such that
\begin{enumerate}
 \item $\Ric(\cg) = O(\rho^\infty)$, if $n$ is odd;
 \item $\Ric(\cg) = O^+(\rho^{(n-2)/2})$, if $n$ is even.
\end{enumerate}
Here we say that a symmetric two-tensor field $S \in \Gamma(S^2T^\ast\cmG)$ \defn{is $O^+(\rho^m)$} if
\begin{enumerate}
 \item $S = O(\rho^m)$; and
 \item for each $z \in \mG$, the tensor $(\iota^\ast(\rho^{-m}S))(z) \in S^2T_z^\ast\mG$ is of the form $\pi^\ast s$ for some $s \in S^2T_{x}^\ast M$, $x := \pi(z)$, satisfying $\tr_{g_x}s = 0$.
\end{enumerate}
Fefferman and Graham proved~\cite{FeffermanGraham2012}*{Theorem~2.3} that ambient spaces always exist.
Indeed, given a choice of metric $g \in [g]$, there is~\cite{FeffermanGraham2012}*{Theorem~2.9(A)} a one-parameter family of metrics $g_\rho$ on $M$, the $\rho$ coefficients of which depend naturally on $g$, such that $g_0 = g$ and
\begin{equation}
 \label{eqn:straight-and-normal}
 \cg := 2\rho\,dt^2 + 2t\,dt\,d\rho + t^2 g_\rho
\end{equation}
is an ambient metric on $\cmG := \bR_+ \times M \times (-\varepsilon,\varepsilon)$ for some $\varepsilon>0$.
Moreover, $g_\rho$ is unique~\cite{FeffermanGraham2012}*{Theorem~2.9(B)} modulo $O(\rho^\infty)$ if $n$ is odd, and modulo $O^+(\rho^{n/2})$ if $n$ is even.

Let $X$ be the infinitesimal generator of the dilations $\delta_\lambda \colon \cmG \to \cmG$.
In coordinates, if $g \in [g]$ is given, then $X = t\partial_t$.
Given $w \in \bR$, define
\begin{equation*}
 \cmE[w] := \left\{ \cf \in C^\infty(\cmG) \suchthatcolon X\cf = w\cf \right\} .
\end{equation*}
The \defn{space of conformal densities of weight $w$} is
\begin{equation*}
 \mE[w] := \left\{ \cf\rv_{\mG} \suchthatcolon \cf \in \cmE[w] \right\} .
\end{equation*}
Note that if $f \in \mE[w]$, then $f(x,\lambda^2g_x) = \lambda^w f(x,g_x)$;
i.e.\ $\mE[w]$ may be identified with the space of equivalence classes for the equivalence relation
\begin{equation*}
 (f,g) \sim (e^{wu}f, e^{2u}g) .
\end{equation*}
Note also that if $\cI$ is a natural scalar Riemannian invariant which is homogeneous of degree $-2k$, $k \leq n/2$, with respect to dilations, then $\cI\rv_{\rho=0,t=1}$ determines a local conformal invariant of weight $-2k$ on $(M^n,g)$.
Two examples are~\cite{FeffermanGraham2012}
\begin{align*}
 \lv\cRm\rv^2 \rv_{\rho=0,t=1} & = \lv W\rv^2 , \\
 \cDelta\lv\cRm\rv^2 \rv_{\rho=0,t=1} & = \Delta\lv W\rv^2 + 4(n-10)\nabla^l(W_{ijkl}C^{ijk}) \\
  & \quad - 4(n-10)P_m^iW_{ijkl}W^{mjkl} + 4(n-5)(n-10)\lv C\rv^2 - 4J\lv W\rv^2 ,
\end{align*}
where $W_{ijkl}$ is the Weyl tensor and $C_{ijk} := \nabla_iP_{jk} - \nabla_jP_{ik}$ is the Cotton tensor.

Set $Q := \lv X\rv^2 \in \cmE[2]$, so that $Q$ is a defining function for $\mG \subset \cmG$.
A polydifferential operator
\begin{equation*}
 \cD \colon \cmE[w_1] \otimes \dotsm \otimes \cmE[w_j] \to \cmE[w_1 + \dotsm + w_j - 2k]
\end{equation*}
is \defn{tangential} if $\iota^\ast \bigl(\cD(\cu_1,\dotsc,\cu_j)\bigr)$ depends only on the restrictions $\iota^\ast\cu_1, \dotsc, \iota^\ast\cu_j$.
In this case $\cD$ determines a conformally invariant polydifferential operator
\begin{equation*}
 D \colon \mE[w_1] \otimes \dotsm \otimes \mE[w_j] \to \mE[ w_1 + \dotsm + w_j - 2k ]
\end{equation*}
by
\begin{equation*}
 D(u_1, \dotsc, u_j) := \iota^\ast \left( \cD( \cu_1 , \dotsc , \cu_j ) \right) ,
\end{equation*}
where $\cu_1, \dotsc, \cu_j$ are such that $\iota^\ast\cu_i = u_i$ for $i \in \{ 1 , \dotsc , j \}$.
The operator $D$ is well-defined in odd dimensions, and in even dimensions provided the restriction of $\cD$ to $\rho^{-1}(\{0\})$ depends only on $\cg$ modulo $O^+(\rho^{n/2})$.
This observation is responsible for the condition $k \leq n/2$ in \cref{ovsienko-redou-1d,ovsienko-redou-2d,ovsienko-redou-3d} (cf.\ \cite{GJMS1992}).

\section{Classification of curved Ovsienko--Redou operators}
\label{sec:construction}

We now turns to the proofs of \cref{ovsienko-redou-1d,ovsienko-redou-2d,ovsienko-redou-3d}.
Our first task is to characterize the bidifferential operators~\eqref{eqn:general-form} which are tangential.

\begin{lemma}
 \label{when-tangential}
 Fix a dimension $n \in \bN$ and weights $w_1,w_2 \in \bR$.
 Given $k \in \bN_0$, the ambient operator
 \begin{equation*}
  \cD(\cu \otimes \cv) := \sum_{s=0}^k \sum_{t=0}^{k-s} \multinom{k}{s}{t} a_{s,t}\cDelta^{k-s-t} \left( \bigl(\cDelta^s\cu\bigr) \bigl(\cDelta^t\cv\bigr) \right)
 \end{equation*}
 is tangential if and only if
 \begin{subequations}
  \label{eqn:when-tangential}
  \begin{align}
  	\label{eqn:when-tangential-s} (2w_1 + n - 2s - 2)a_{s+1,t} & = -(2w_1 + 2w_2 + n - 2k - 2s - 2t)a_{s,t} , \\
  	\label{eqn:when-tangential-t} (2w_2 + n - 2t - 2)a_{s,t+1} & = -(2w_1 + 2w_2 + n - 2k - 2s - 2t)a_{s,t}
  \end{align}
 \end{subequations}
 for all $s,t \in \bN_0$ such that $s+t \leq k$.
\end{lemma}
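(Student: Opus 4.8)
The plan is to reduce tangentiality to a pair of boundary conditions, one for each slot, and then to extract the recursions by expanding in the ideal generated by $Q$. Since $Q$ is a defining function for $\mG$, a density $\cu\in\cmE[w_1]$ restricts to zero on $\mG$ if and only if $\cu = Q\cu'$ for some $\cu'\in\cmE[w_1-2]$; hence $\cD$ is tangential if and only if $\iota^\ast\cD\bigl((Q\cu')\otimes\cv\bigr)=0$ and $\iota^\ast\cD\bigl(\cu\otimes(Q\cv')\bigr)=0$ for all admissible arguments. The two conditions are interchanged by swapping the slots, so it suffices to analyze the first; it will yield \eqref{eqn:when-tangential-s}, and symmetry will give \eqref{eqn:when-tangential-t}.

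The computational engine is an exact commutation identity for the ambient Laplacian. Because the ambient metric is in normal form, the dilation field satisfies $\cnabla X = \id$, whence $\cnabla Q = 2X$ and $\cDelta Q = 2(n+2)$; for $\cf\in\cmE[w]$ this gives the \emph{exact} relation
\begin{equation*}
 \cDelta(Q\cf) = Q\cDelta\cf + 2(2w+n+2)\cf .
\end{equation*}
Iterating, for $\cf\in\cmE[w]$ and any $m$ one obtains an exact expansion $\cDelta^m(Q\cf) = Q\cDelta^m\cf + \kappa_{m,w}\cDelta^{m-1}\cf$, where $\kappa_{m,w}$ is a telescoping sum that collapses to a product of two linear factors in $m$ and $w$. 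I would then substitute $\cu = Q\cu'$ into $\cD$, apply this identity both to each inner power $\cDelta^s(Q\cu')$ and to each outer factor $\cDelta^{k-s-t}\bigl(Q(\cdots)\bigr)$, and discard every term still carrying a factor of $Q$ after $\iota^\ast$. What survives is a linear combination of the model operators
\begin{equation*}
 B_{a,b}(\cu'\otimes\cv) := \iota^\ast\cDelta^{k-1-a-b}\bigl((\cDelta^a\cu')(\cDelta^b\cv)\bigr), \qquad a+b\le k-1 ,
\end{equation*}
and $B_{a,b}$ receives exactly two contributions: one from the index $(s,t)=(a,b)$ (the surviving outer-Laplacian term) and one from $(s,t)=(a+1,b)$ (the inner commutator term). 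Collecting, the coefficient of $B_{a,b}$ is $\multinom{k}{a}{b}a_{a,b}\,S_{a,b}+\multinom{k}{a+1}{b}a_{a+1,b}\,c_{a+1}$ for appropriate telescoping constants $S_{a,b},c_{a+1}$. Evaluating these sums, using $\multinom{k}{a+1}{b}/\multinom{k}{a}{b}=(k-a-b)/(a+1)$, and cancelling the common factor $2(k-a-b)$ turns the vanishing of this coefficient into precisely \eqref{eqn:when-tangential-s} with $(s,t)=(a,b)$.

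The main obstacle is justifying that $\iota^\ast\cD\bigl((Q\cu')\otimes\cv\bigr)=0$ for all $\cu',\cv$ forces each coefficient of $B_{a,b}$ to vanish, i.e.\ that the $B_{a,b}$ are linearly independent. I would establish this by a symbol computation in the flat ambient space, where $B_{a,b}$ sends $e^{\langle p,z\rangle}\otimes e^{\langle q,z\rangle}$ to $\lv p\rv^{2a}\lv q\rv^{2b}\lv p+q\rv^{2(k-1-a-b)}e^{\langle p+q,z\rangle}$; since $\lv p\rv^2$, $\lv q\rv^2$, $\lv p+q\rv^2$ are algebraically independent, the associated monomials, and hence the operators $B_{a,b}$, are linearly independent. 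With independence established, the first slot produces \eqref{eqn:when-tangential-s} and, by the symmetric computation, the second slot produces \eqref{eqn:when-tangential-t}; conversely, if both recursions hold then every coefficient of every $B_{a,b}$ vanishes, so $\cD$ is tangential. The only remaining care is bookkeeping: tracking the index ranges so that out-of-range coefficients are treated as zero, and confirming that $\kappa_{m,w}$, $c_{a+1}$, and $S_{a,b}$ factor as the two linear terms appearing in \eqref{eqn:when-tangential-s}–\eqref{eqn:when-tangential-t}.
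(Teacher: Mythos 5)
Your computational core coincides with the paper's proof: the paper likewise reduces tangentiality to the vanishing of $\cD(Q\cu\otimes\cv)$ and $\cD(\cu\otimes Q\cv)$ modulo $Q$, invokes the commutator identity \eqref{eqn:Deltaell-Q-commutator} (your exact relation $\cDelta^m(Q\cf)=Q\cDelta^m\cf+2m(2w+n+4-2m)\cDelta^{m-1}\cf$ is precisely that identity evaluated on $\cf\in\cmE[w]$, and your derivation from $\cnabla X=\id$ is sound), and collects the two contributions to each term $\cDelta^{k-1-a-b}\bigl((\cDelta^a\cu)(\cDelta^b\cv)\bigr)$; your bookkeeping, the ratio $\multinom{k}{a+1}{b}/\multinom{k}{a}{b}=(k-a-b)/(a+1)$, and the cancellation of $2(k-a-b)$ reproduce \eqref{eqn:when-tangential-s} exactly as in the paper, with the slot-swap giving \eqref{eqn:when-tangential-t}.

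The step you add beyond the paper --- justifying that tangentiality forces each collected coefficient of $B_{a,b}$ to vanish --- is indeed needed for the ``only if'' direction (the paper passes over it silently), but your argument for it has a genuine flaw. Tangentiality only constrains $\iota^\ast\cD\bigl((Q\cu')\otimes\cv\bigr)$ for \emph{homogeneous} inputs $\cu'\in\cmE[w_1-2]$, $\cv\in\cmE[w_2]$, and the expansion $\iota^\ast\cD\bigl((Q\cu')\otimes\cv\bigr)=\sum C_{a,b}\,B_{a,b}(\cu'\otimes\cv)$ with \emph{scalar} coefficients $C_{a,b}$ is itself valid only on homogeneous inputs, since those scalars arise by evaluating $2X+n+4-2m$ on eigenfunctions of $X$. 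Your test functions $e^{\langle p,z\rangle}$ are not homogeneous, so the hypothesis tells you nothing about them; and linear independence of the $B_{a,b}$ on all of $C^\infty$ does not imply independence of their restrictions to $\cmE[w_1-2]\otimes\cmE[w_2]$ composed with $\iota^\ast$ (both restricting the domain and post-composing with $\iota^\ast$ can create new relations, e.g.\ $X-w$ kills $\cmE[w]$ and $\iota^\ast$ kills $Q\cdot\cDelta^{r}$). A second, smaller defect: you verify independence only on the flat ambient space, whereas the lemma, as used to classify operators on an arbitrary $(M^n,[g])$, needs the ``only if'' direction on every ambient space. Both are repaired by testing inside the admissible class: use the normal form \eqref{eqn:straight-and-normal} to identify $\cu'\in\cmE[w_1-2]$ with an arbitrary function $u'(x,\rho)$ via $\cu'=t^{w_1-2}u'$, take $u'=e^{i\lambda\phi(x)}$ and $v=e^{i\mu\psi(x)}$, and read off from \eqref{eqn:ambient-laplacian-formula} that the top-order part in $(\lambda,\mu)$ of $\sum C_{a,b}\,B_{a,b}(\cu'\otimes\cv)$ is $(-1)^{k-1}\sum C_{a,b}\lv\xi\rv^{2a}\lv\eta\rv^{2b}\lv\xi+\eta\rv^{2(k-1-a-b)}$ with $\xi=\lambda\,d\phi$, $\eta=\mu\,d\psi$ arbitrary covectors on $M$; your algebraic-independence observation, now applied on $T_x^\ast M$ (which requires $n\geq2$), then forces $C_{a,b}=0$ on any ambient space.
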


\begin{proof}
 It is known~\cite{GJMS1992}*{Equation~(1.8)} that
 \begin{equation}
  \label{eqn:Deltaell-Q-commutator}
  [ \cDelta^\ell , Q ] = 2\ell \cDelta^{\ell-1}(2X + n + 4 - 2\ell)
 \end{equation}
 for all $\ell \in \bN$.
 We immediately deduce that
 \begin{multline}
  \label{eqn:tangential-1}
  \cDelta^r \left( \bigl( \cDelta^s(Q\cu) \bigr) \bigl( \cDelta^t\cv \bigr) \right) \equiv 2s(2w_1 + n - 2s) \cDelta^{r} \left( \bigl( \cDelta^{s-1}\cu \bigr) \bigl( \cDelta^t\cv \bigr) \right) \\
   + 2r(2w_1 + 2w_2 + n - 2r - 4s - 4t)\cDelta^{r-1} \left( \bigl( \cDelta^s\cu \bigr) \bigl( \cDelta^t\cv \bigr) \right) \mod Q
 \end{multline}
 for all $\cu \in \cmE[w_1-2]$, all $\cv \in \cmE[w_2]$, and all $r,s,t \in \bN_0$.
 Equation~\eqref{eqn:tangential-1} implies that
 \begin{multline*}
  \frac{1}{2}\cD(Q\cu \otimes \cv) \equiv \sum_{s=0}^{k} \sum_{t=0}^{k-s} k\multinom{k-1}{s}{t} \Bigl[ (2w_1 + n - 2s - 2)a_{s+1,t} \\
   + (2w_1 + 2w_2 + n - 2k - 2s - 2t)a_{s,t} \Bigr] \cDelta^{k-1-s-t} \left( \bigl(\cDelta^s\cu \bigr) \bigl( \cDelta^t\cv \bigr) \right) \mod Q
 \end{multline*}
 for all $\cu \in \cmE[w_1-2]$ and all $\cv \in \cmE[w_2]$.
 In particular, $\cD$ is tangential in its first factor if and only if~\eqref{eqn:when-tangential-s} holds.
 Switching the roles of $\cu$ and $\cv$ in~\eqref{eqn:tangential-1} yields
 \begin{multline*}
  \frac{1}{2}\cD(\cu \otimes Q\cv) \equiv \sum_{s=0}^k \sum_{t=0}^{k-s} k\multinom{k-1}{s}{t} \Bigl[ (2w_2 + n - 2t - 2)a_{s,t+1} \\
   + (2w_1 + 2w_2 + n - 2k - 2s - 2t)a_{s,t} \Bigr] \cDelta^{k-1-s-t} \left( \bigl( \cDelta^s\cu \bigr) \bigl( \cDelta^t\cv \bigr) \right) \mod Q
 \end{multline*}
 for all $\cu \in \cmE[w_1]$ and all $\cv \in \cmE[w_2-2]$.
 In particular, $\cD$ is tangential in its second factor if and only if~\eqref{eqn:when-tangential-t} holds.
\end{proof}

The proofs of \cref{ovsienko-redou-1d,ovsienko-redou-2d,ovsienko-redou-3d} amount to studying the solutions to the recursive system~\eqref{eqn:when-tangential}.

\begin{proof}[Proof of \cref{ovsienko-redou-1d}]
 Suppose first that $w_1,w_2 \not\in \mI_k$ and $w_1+w_2 \not\in \mO_k$.
 Then the coefficients in~\eqref{eqn:when-tangential} are nonzero for all $s,t \in \bN_0$ such that $s+t \leq k$.
 It follows that $a_{s,t}$ is uniquely determined by the choice of $a_{0,0}$.
 Our basis element takes
 \begin{equation*}
  a_{0,0} = \frac{\Gamma\bigl( w_1 + \frac{n}{2} \bigr) \Gamma\bigl( w_2 + \frac{n}{2} \bigr)}{\Gamma\bigl( w_1 + \frac{n-2k}{2} \bigr)\Gamma\bigl( w_2 + \frac{n-2k}{2} \bigr)} .
 \end{equation*}
 
 Suppose next that exactly one of $w_1 \in \mI_k$ or $w_2 \in \mI_k$ or $w_1+w_2 \in \mO_k$ holds.
 We present the case $w_1 \in \mI_k$;
 the other cases are similar.
 Let $j \in \{ 0, \dotsc, k-1 \}$ be such that $w_1 = -\frac{n-2k}{2} - j$.
 By assumption, $-w_2 + j \not\in \{ 0 , -1, \dotsc, 1-k \}$.
 Equation~\eqref{eqn:when-tangential} requires that
 \begin{align*}
  (k-j-s-1)a_{s+1,t} & = (-w_2+j+s+t)a_{s,t} , \\
  (2w_2 + n - 2t - 2)a_{s,t+1} & = 2(-w_2+j+s+t)a_{s,t} .
 \end{align*}
 It follows that $a_{s,t}$ is uniquely determined by the choice of $a_{k-j,0}$;
 moreover, $a_{s,t}=0$ if $s < k-j$.
 Our basis element takes
 \begin{align*}
  a_{k-j,0} & = \frac{\Gamma\bigl(-w_2+k\bigr)\Gamma\bigl(0\bigr)\Gamma\bigl(w_2+\frac{n}{2}\bigr)}{\Gamma\bigl(-w_2+j\bigr)\Gamma\bigl(-j\bigr)\Gamma\bigl(w_2+\frac{n-2k}{2}\bigr)} \\
  & = (-1)^jj!\frac{\Gamma\bigl(-w_2+k\bigr)\Gamma\bigl(w_2+\frac{n}{2}\bigr)}{\Gamma\bigl(-w_2+j\bigr)\Gamma\bigl(w_2+\frac{n-2k}{2}\bigr)} .
 \end{align*}

 Suppose finally that we are in one of the last three cases of \cref{ovsienko-redou-1d}.
 We present the case $w_1,w_2 \in \mI_k$ with $w_1+w_2+n \leq k$ but $w_1 + w_2 \not\in \mO_k$;
 the other cases are similar.
 Let $i,j \in \{ 0, \dotsc, k-1 \}$ be such that $w_1 = -\frac{n-2k}{2} - i$ and $w_2 = -\frac{n-2k}{2} - j$.
 By assumption, $i+j \geq k$ and $\frac{n-2k}{2} + i + j \not\in \{ 0, -1, \dotsc, 1-k \}$.
 Equation~\eqref{eqn:when-tangential} requires that
 \begin{align*}
  (k-i-s-1)a_{s+1,t} & = \left(\frac{n-2k}{2}+i+j+s+t\right)a_{s,t} , \\
  (k-j-t-1)a_{s,t+1} & = \left(\frac{n-2k}{2}+i+j+s+t\right)a_{s,t} .
 \end{align*}
 It follows that $a_{s,t}$ is uniquely determined by the choice of $a_{k-i,k-j}$;
 moreover, $a_{s,t}=0$ if $s < k-i$ or $t < k-j$.
 Our basis element takes
 \begin{equation*}
  a_{k-i,k-j} = (-1)^{i+j}i!j! \frac{\Gamma\bigl(\frac{n+2k}{2}\bigr)}{\Gamma\bigl(\frac{n-2k}{2}+i+j\bigr)} .
 \end{equation*}

 Finally, the formula~\cite{GJMS1992}*{Equation~(3.5)}
 \begin{equation}
  \label{eqn:ambient-laplacian-formula}
  \cDelta(t^w f) = t^{\gamma-2} \left( -2\rho\partial_\rho^2 + (2w + n - 2 - \rho g_\rho^{ij}g_{ij}^\prime)\partial_\rho + \Delta_{g_\rho} + \frac{w}{2}g_\rho^{ij}g_{ij}^\prime \right)f ,
 \end{equation}
 $w \in \bR$ and $f = f(x,\rho)$, for the Laplacian of the ambient metric~\eqref{eqn:straight-and-normal} implies that $D_{2k;w_1,w_2}$ is natural and depends only on $g_{ij}^{(\ell)}\rv_{\rho=0}$, $\ell \leq k-1$, and $g^{ij}g_{ij}^{(k)}\rv_{\rho=0}$, where $g_{ij}^{(\ell)} := \partial_\rho^\ell g_{ij}$.
 Since $k \leq n/2$, we conclude that $D_{2k;w_1,w_2}$ is independent of the ambiguity of $g_\rho$.
\end{proof}

\begin{proof}[Proof of \cref{ovsienko-redou-2d}]
 Suppose that $w_1,w_2 \in \mI_k$ are such that $w_1+w_2+n > k$ but $w_1 + w_2 \not\in \mO_k$;
 the other cases are similar.
 Let $i,j \in \{ 0, \dotsc, k-1 \}$ be such that $w_1 = -\frac{n-2k}{2} - i$ and $w_2 = -\frac{n-2k}{2} - j$.
 By assumption, $i+j < k$ and $\frac{n-2k}{2} + i + j \not\in \{ 0, -1, \dotsc, 1-k \}$.
 Equation~\eqref{eqn:when-tangential} requires that
 \begin{align*}
  (k-i-s-1)a_{s+1,t} & = \left(\frac{n-2k}{2}+i+j+s+t\right)a_{s,t} , \\
  (k-j-t-1)a_{s,t+1} & = \left(\frac{n-2k}{2}+i+j+s+t\right)a_{s,t} .
 \end{align*}
 It follows that $a_{s,t}$ is uniquely determined by the independent choices of $a_{k-i,0}$ and $a_{0,k-j}$;
 moreover, $a_{s,t}=0$ if $s < k-i$ and $t < k-j$.
 Our two basis elements take
 \begin{align*}
  a_{k-i,0}^{(1)} & = (-1)^i i! \frac{\Gamma\bigl(\frac{n}{2}+j\bigr)}{\Gamma\bigl(\frac{n-2k}{2}+i+j\bigr)} , & a_{0,k-j}^{(1)} & = 0 , \\
  a_{0,k-j}^{(2)} & = (-1)^i i! \frac{\Gamma\bigl(\frac{n}{2}+i\bigr)\Gamma\bigl(k-i\bigr)}{\Gamma\bigl(\frac{n-2k}{2}+i+j\bigr)\Gamma\bigl(k-j\bigr)} , & a_{k-i,0}^{(2)} & = 0 .
 \end{align*}
 
 That $D_{2k;w_1,w_2}$ is natural and well-defined follows as in the proof of \cref{ovsienko-redou-1d}.
\end{proof}

\begin{proof}[Proof of \cref{ovsienko-redou-3d}]
 It readily follows from the definitions of $\mI_k$ and $\mO_k$ that if  $w_1,w_2 \in \mI_k$ and $w_1+w_2 \in \mO_k$, then $n=2k$ and $w_1=w_2=0$.
 In this case, \eqref{eqn:when-tangential} implies that
 \begin{align*}
  (k-s-1)a_{s+1,t} & = (s+t)a_{s,t} , \\
  (k-t-1)a_{s,t+1} & = (s+t)a_{s,t} .
 \end{align*}
 It readily follows that $a_{s,t}=0$ if $(s,t) \not\in \{ (0,0), (0,k) , (k,0) \}$, and that the values of $a_{0,0}$, $a_{0,k}$, and $a_{k,0}$ may be independently chosen.
 Our three basis elements take $a_{0,0}^{(1)}=a_{0,k}^{(2)}=a_{k,0}^{(3)}=1$ and all other coefficients to be zero.
 
 That $D_{2k;w_1,w_2}$ is natural and well-defined follows as in the proof of \cref{ovsienko-redou-1d}.
\end{proof}

\section{A commutator formula on $S^n$}
\label{sec:commutator}

In this section we prove \cref{commutator} by a lengthy direct computation.
Our computations implicitly include the formula for the commutator of the Ovsienko--Redou operator $D_{2k;-\frac{n-2k}{3},-\frac{n-2k}{3}}$ with a single first spherical harmonic, which may be of independent interest.

Relative to the analogous commutator identity for the GJMS operators~\cite{Case2019fl}, the most involved step in our computation is the removal of the first-order derivatives.
The result is expressed by the following lemma:

\begin{lemma}
 \label{basic-commutator}
 Let $(\bR^{n+1,1},\cg)$ be Minkowski space.
 Given $r,s,t \in \bN_0$, denote
 \begin{equation*}
  \cD_i ( \cu \otimes \cv ) := x^i \left[ \tau\cDelta^r \left( \bigl( \mL_{\cnabla x^i}\cDelta^s\cu \bigr) \bigl( \cDelta^t\cv \bigr) \right) - x^i\cDelta^r \left( \bigl( \mL_{\cnabla\tau}\cDelta^s\cu \bigr) \bigl( \cDelta^t\cv \bigr) \right) \right] .
 \end{equation*}
 Then
 \begin{multline*}
  \sum_{i=0}^n \cD_i( \cu \otimes \cv ) \equiv \tau \cDelta^r \left( \bigl( (X-r)\cDelta^s\cu \bigr) \bigl( \cDelta^t\cv \bigr) \right) \\
   - r\tau\cDelta^{r-1} \left( \bigl(\cDelta^{s+1}\cu \bigr) \bigl( \cDelta^t\cv \bigr) - \bigl( \cDelta^s\cu \bigr) \bigl( \cDelta^{t+1}\cv \bigr) \right) \mod Q
 \end{multline*}
 for all $\cu,\cv \in C^\infty(\bR^{n+1,1})$.
\end{lemma}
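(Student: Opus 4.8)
The plan is to expand everything into the flat coordinate derivatives on $(\bR^{n+1,1},\cg)$ and to reduce the whole expression to powers of $\cDelta$ acting on products, with no bare first derivatives surviving. First I would record that $\cnabla x^i=\partial_{x^i}$ and $\cnabla\tau=-\partial_\tau$, so that after substituting the gradients and summing,
\[
 \sum_{i=0}^n\cD_i(\cu\otimes\cv)
 = \tau\sum_{i=0}^n x^i\cDelta^r\bigl((\partial_{x^i}\cDelta^s\cu)(\cDelta^t\cv)\bigr)
 + \Bigl(\sum_{i=0}^n(x^i)^2\Bigr)\cDelta^r\bigl((\partial_\tau\cDelta^s\cu)(\cDelta^t\cv)\bigr).
\]
Since $\sum_i(x^i)^2=\lv x\rv^2=\tau^2+Q$, the second group equals $\tau^2\cDelta^r\bigl((\partial_\tau\cDelta^s\cu)(\cDelta^t\cv)\bigr)$ modulo $Q$. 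This is the \emph{only} place the relation $Q\equiv0$ is invoked; every subsequent manipulation will be exact.

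The engine is the pair of elementary flat commutators $[\cDelta,x^i]=2\partial_{x^i}$ and $[\cDelta,\tau]=-2\partial_\tau$, which iterate to $[\cDelta^r,x^i]=2r\partial_{x^i}\cDelta^{r-1}$ and $[\cDelta^r,\tau]=-2r\partial_\tau\cDelta^{r-1}$ (the quadratic analogue for $Q$ being~\eqref{eqn:Deltaell-Q-commutator}). I would use the first to carry each external $x^i$ through $\cDelta^r$ in the first group. The transport term collects $\sum_i x^i\partial_{x^i}=X-\tau\partial_\tau$ and produces the Euler-operator piece $\tau\cDelta^r\bigl((X\cDelta^s\cu)(\cDelta^t\cv)\bigr)-\tau\cDelta^r\bigl((\tau\partial_\tau\cDelta^s\cu)(\cDelta^t\cv)\bigr)$, while the commutator piece produces $-2r\tau\cDelta^{r-1}\sum_i\partial_{x^i}\bigl((\partial_{x^i}\cDelta^s\cu)(\cDelta^t\cv)\bigr)$.

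The heart of the argument---the step the text flags as most involved---is disposing of the genuinely first-order leftovers. Expanding the commutator piece by the Leibniz rule and using $\sum_i\partial_{x^i}^2=\cDelta+\partial_\tau^2$ produces the term $-2r\tau\cDelta^{r-1}\bigl((\cDelta^{s+1}\cu)(\cDelta^t\cv)\bigr)$, a $\partial_\tau^2$ term, and the $x$-gradient cross term $\sum_i(\partial_{x^i}\cDelta^s\cu)(\partial_{x^i}\cDelta^t\cv)$. I would rewrite the last via the carré-du-champ identity $\sum_i(\partial_{x^i}f)(\partial_{x^i}g)=\tfrac12\bigl(\cDelta(fg)-(\cDelta f)g-f\cDelta g\bigr)+(\partial_\tau f)(\partial_\tau g)$. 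Its $\cDelta(fg)$ part contributes exactly the $-r\tau\cDelta^r\bigl((\cDelta^s\cu)(\cDelta^t\cv)\bigr)$ needed to turn $X$ into $X-r$; its $-(\cDelta f)g-f\cDelta g$ part, combined with the $(\cDelta^{s+1}\cu)(\cDelta^t\cv)$ term already present, supplies the two $\cDelta^{r-1}$ correction terms of the claimed formula; and it leaves behind a new $\partial_\tau$ cross term.

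What remains is to verify that all $\partial_\tau$- and $\partial_\tau^2$-terms cancel, which is the only delicate bookkeeping. This is where the second group re-enters: commuting the single $\tau$ out of $-\tau\cDelta^r\bigl((\tau\partial_\tau\cDelta^s\cu)(\cDelta^t\cv)\bigr)$ against $\tau^2\cDelta^r\bigl((\partial_\tau\cDelta^s\cu)(\cDelta^t\cv)\bigr)$ collapses their sum to $2r\tau\cDelta^{r-1}\partial_\tau\bigl((\partial_\tau\cDelta^s\cu)(\cDelta^t\cv)\bigr)$, whose Leibniz expansion is precisely the $\partial_\tau^2$ term and the $\partial_\tau$ cross term isolated above, but with opposite sign. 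They cancel identically, leaving the asserted right-hand side. The factors of $r$ throughout force the $\cDelta^{r-1}$ terms to vanish when $r=0$, so no negative power of $\cDelta$ is ever needed. I expect the main obstacle to be purely organizational: keeping the three distinct sources of $\partial_\tau$-derivatives (from the transport term, from $\sum_i\partial_{x^i}^2=\cDelta+\partial_\tau^2$, and from the carré-du-champ remainder) aligned so that their exact cancellation against the single commutator $[\cDelta^r,\tau]$ is manifest.
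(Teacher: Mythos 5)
Your proposal is correct: every identity you invoke holds with the signs you state ($\mL_{\cnabla x^i}=\partial_{x^i}$, $\mL_{\cnabla\tau}=-\partial_\tau$, $[\cDelta^r,x^i]=2r\partial_{x^i}\cDelta^{r-1}$, $[\cDelta^r,\tau]=-2r\partial_\tau\cDelta^{r-1}$, and the Minkowski-signed carr\'e-du-champ identity), and the final cancellation closes exactly as you describe: writing $F:=(\partial_\tau\cDelta^s\cu)(\cDelta^t\cv)$, one has $-\tau\cDelta^r(\tau F)+\tau^2\cDelta^r F=2r\tau\cDelta^{r-1}\partial_\tau F$, which annihilates the $-2r\tau\cDelta^{r-1}\partial_\tau F$ left over from the Leibniz and cross-term expansion, leaving precisely the asserted right-hand side plus the single manifest error term $Q\cDelta^r F$. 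Your route shares the paper's toolkit --- your summation identities are exactly the paper's \eqref{eqn:sum-xi}, \eqref{eqn:sum-Lie-xi}, \eqref{eqn:sum-Lie2}, and \eqref{eqn:square} written in flat coordinates --- but the decomposition is genuinely different, and leaner. The paper computes each $\cD_i$ \emph{individually}: it assembles the boost/rotation fields $X^i=\tau\cnabla x^i-x^i\cnabla\tau$ and pushes the external factors through the Laplacian powers, producing double-commutator terms at order $\cDelta^{r-2}$ in \eqref{eqn:Li}; after summing over $i$, its expression \eqref{eqn:commutator-almost-done} still contains $\cDelta^r\left(Q\bigl(\mL_{\cnabla\tau}\cDelta^s\cu\bigr)\bigl(\cDelta^t\cv\bigr)\right)$ with $Q$ trapped \emph{inside} the operator, whose removal requires the ambient commutator \eqref{eqn:Deltaell-Q-commutator} together with \eqref{eqn:Lie-X}. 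You instead sum over $i$ from the outset, so that $\sum_i(x^i)^2=Q+\tau^2$ is exploited while $Q$ is still an external multiplier: the reduction mod $Q$ happens once and trivially, only the single factor $x^i$ is ever moved inside (so no $\cDelta^{r-2}$ terms and no $[\cDelta^\ell,Q]$ formula are needed), and all $\tau$-bookkeeping collapses to one commutation of $\tau$ through $\cDelta^r$ at the end. The trade-off is that the paper's per-$i$ formula \eqref{eqn:Li} is what yields the commutator with a \emph{single} first spherical harmonic, which the paper flags as possibly of independent interest; your argument, by summing immediately, produces only the summed identity --- but for the lemma as stated it is a shorter and cleaner proof.
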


\begin{proof}
 To begin, observe that if $x, y \in \{ \tau, x^0, \dotsc, x^n \}$, then
 \begin{align}
  \label{eqn:commutator-Delta-x} [ \cDelta, x] & = 2\mL_{\cnabla x} , \\
  \label{eqn:commutator-Delta-Lie} [ \cDelta , \mL_{\cnabla x} ] & = 0 , \\
  \label{eqn:commutator-Lie-Lie} [ \mL_{\cnabla x}, \mL_{\cnabla y} ] & = 0 ,
 \end{align}
 where $\mL_Y\cu := \cg(Y,\cnabla \cu)$.
 Denote $X^i := \tau\cnabla x^i - x^i \cnabla\tau$.
 A direct computation using~\eqref{eqn:commutator-Delta-x}, \eqref{eqn:commutator-Delta-Lie} and~\eqref{eqn:commutator-Lie-Lie} yields
 \begin{equation}
  \label{eqn:Li}
  \begin{split}
   \cD_i (\cu \otimes \cv) & = x^i \cDelta^r \left( \bigl( \mL_{X^i}\cDelta^s\cu \bigr) \bigl( \cDelta^t\cv \bigr) \right) \\
    & \quad - 2rx^i \cDelta^{r-1} \left( \bigl( \mL_{\cnabla x^i}\cDelta^s\cu \bigr) \bigl( \mL_{\cnabla \tau} \cDelta^t \cv \bigr) - \bigl( \mL_{\cnabla\tau} \cDelta^s\cu \bigr) \bigl( \mL_{\cnabla x^i} \cDelta^t \cv \bigr) \right) \\
   & = \cDelta^r \left( \bigl( x^i \mL_{X^i} \cDelta^s\cu \bigr) \bigl( \cDelta^t\cv \bigr) \right) \\
    & \quad - 2r \cDelta^{r-1} \Bigl( \bigl( \tau\mL_{\cnabla x^i}\mL_{\cnabla x^i} \cDelta^s\cu \bigr) \bigl( \cDelta^t\cv \bigr) - \bigl( (\mL_{x^i\cnabla x^i} + 1) \mL_{\cnabla\tau} \cDelta^s\cu \bigr) \bigl( \cDelta^t\cv \bigr) \\
    & \qquad + \bigl( \tau\mL_{\cnabla x^i}\cDelta^s\cu \bigr) \bigl( \mL_{\cnabla x^i} \cDelta^t \cv \bigr) - \bigl( \mL_{\cnabla \tau}\cDelta^s\cu \bigr) \bigl( \mL_{x^i\cnabla x^i}\cDelta^t \cv \bigr) \\
    & \qquad + \bigl( \mL_{x^i\cnabla x^i}\cDelta^s\cu \bigr) \bigl( \mL_{\cnabla\tau}\cDelta^t\cv \bigr) - \bigl( \mL_{\cnabla\tau}\cDelta^s\cu \bigr) \bigl( \mL_{x^i\cnabla x^i} \cDelta^t\cv \bigr) \Bigr) \\
   & \quad + 4r(r-1)\cDelta^{r-2} \Bigl( \bigl( \mL_{\cnabla x^i} \mL_{\cnabla x^i} \cDelta^s\cu \bigr) \bigl( \mL_{\cnabla\tau} \cDelta^t\cv \bigr) - \bigl( \mL_{\cnabla x^i} \mL_{\cnabla\tau} \cDelta^s \cu \bigr) \bigl( \mL_{\cnabla x^i} \cDelta^t\cv \bigr) \\
    & \qquad + \bigl( \mL_{\cnabla x^i}\cDelta^s\cu \bigr) \bigl( \mL_{\cnabla x^i}\mL_{\cnabla \tau} \cDelta^t\cv \bigr) - \bigl( \mL_{\cnabla\tau} \cDelta^s\cu \bigr) \bigl( \mL_{\cnabla x^i} \mL_{\cnabla x^i} \cDelta^t \cv \bigr) \Bigr) .
  \end{split}
 \end{equation}
 Next observe that
 \begin{align}
  \label{eqn:sum-xi} \sum_{i=0}^n (x^i)^2 & = Q + \tau^2 , \\
  \label{eqn:sum-Lie-xi} \sum_{i=0}^n \mL_{x^i\cnabla x^i} & = X + \tau\mL_{\cnabla\tau} , \\
  \label{eqn:sum-Lie2} \sum_{i=0}^n \mL_{\cnabla x^i}\mL_{\cnabla x^i} & = \cDelta + \mL_{\cnabla\tau}\mL_{\cnabla\tau} .
 \end{align}
 Additionally,
 \begin{align}
  \label{eqn:triple-sum} \sum_{i=0}^n \bigl( \mL_{\cnabla x^i} \mL_{\cnabla\tau} \cu \bigr) \bigl( \mL_{\cnabla x^i}\cv \bigr) & = \frac{1}{2} \cDelta \left( \cv \mL_{\cnabla\tau} \cu \right) - \frac{1}{2}\left( \cv\mL_{\cnabla\tau}\cDelta\cu + \bigl( \mL_{\cnabla\tau}\cu \bigr) \bigl( \cDelta\cv \bigr) \right) \\
   \notag & \quad + \bigl( \mL_{\cnabla\tau}\mL_{\cnabla\tau}\cu \bigr) \bigl( \mL_{\cnabla\tau}\cv \bigr) , \\
  \label{eqn:square} \sum_{i=0}^n \bigl( \mL_{\cnabla x^i}\cu \bigr) \bigl( \mL_{\cnabla x^i}\cv \bigr) & = \frac{1}{2}\cDelta\bigl( \cu\cv \bigr) - \frac{1}{2}\cv\cDelta\cu - \frac{1}{2}\cu\cDelta\cv + \bigl( \mL_{\cnabla\tau}\cu \bigr) \bigl( \mL_{\cnabla\tau} \cv \bigr)
 \end{align}
 for all $\cu, \cv \in C^\infty(\bR^{n+1,1})$.
 Set
 \begin{equation*}
  \cD := \sum_{i=0}^n \cD_i( \cu \otimes \cv ) .
 \end{equation*}
 Combining~\eqref{eqn:Li} through~\eqref{eqn:square} yields
 \begin{align*}
  \cD( \cu \otimes \cv ) & = \cDelta^r \left( \bigl( \tau X \cDelta^s\cu \bigr) \bigl( \cDelta^t \cv \bigr) - \bigl( Q\mL_{\cnabla\tau}\cDelta^s \cu \bigr) \bigl( \cDelta^t\cv \bigr) \right) \\
   & - 2r\cDelta^{r-1} \Bigl( \bigl( \tau\cDelta^{s+1}\cu \bigr) \bigl( \cDelta^t\cv \bigr) - \bigl( (X + n + 1)\mL_{\cnabla\tau}\cDelta^s\cu \bigr) \bigl( \cDelta^t\cv \bigr) \\
    & \qquad + \frac{\tau}{2}\cDelta \left( \bigl( \cDelta^s\cu \bigr) \bigl( \cDelta^t\cv \bigr) \right) - \frac{\tau}{2}\bigl( \cDelta^{s+1}\cu \bigr) \bigl( \cDelta^t\cv \bigr) - \frac{\tau}{2}\bigl( \cDelta^s\cu \bigr) \bigl( \cDelta^{t+1}\cv \bigr) \\
    & \qquad + \bigl( X\cDelta^s\cu \bigr) \bigl( \mL_{\cnabla\tau}\cDelta^t\cv \bigr) - 2\bigl( \mL_{\cnabla\tau}\cDelta^s\cu \bigr) \bigl( X\cDelta^t\cv \bigr) \Bigr) \\
   & \quad + 4r(r-1)\cDelta^{r-2} \Bigl( \bigl( \cDelta^{s+1}\cu \bigr) \bigl( \mL_{\cnabla\tau}\cDelta^t\cv \bigr) - \frac{1}{2}\cDelta \left( \bigl( \mL_{\cnabla\tau}\cDelta^s\cu \bigr) \bigl( \cDelta^t\cv \bigr) \right) \\
    & \qquad + \frac{1}{2}\bigl( \mL_{\cnabla\tau}\cDelta^{s+1}\cu \bigr) \bigl( \cDelta^t\cv \bigr) + \frac{1}{2}\bigl( \mL_{\cnabla\tau}\cDelta^s\cu \bigr) \bigl( \cDelta^{t+1}\cv \bigr) + \frac{1}{2}\cDelta \left( \bigl( \cDelta^s\cu \bigr) \bigl( \mL_{\cnabla\tau} \cDelta^t\cv \bigr) \right) \\
    & \qquad - \frac{1}{2} \bigl( \cDelta^{s+1}\cu \bigr) \bigl( \mL_{\cnabla\tau}\cDelta^t\cv \bigr) - \frac{1}{2} \bigl( \cDelta^s\cu \bigr) \bigl( \mL_{\cnabla\tau}\cDelta^{t+1}\cv \bigr) - \bigl( \mL_{\cnabla\tau}\cDelta^s\cu \bigr) \bigl( \cDelta^{t+1}\cv \bigr) \Bigr) .
 \end{align*}
 Simplifying this by combining like terms and using~\eqref{eqn:commutator-Delta-x} yields
 \begin{equation}
  \label{eqn:commutator-almost-done}
  \begin{split}
   \cD( \cu \otimes \cv) & = \tau\cDelta^r \left( \bigl( (X-r)\cDelta^s\cu \bigr) \bigl( \cDelta^t\cv \bigr) \right) - \cDelta^r \left( Q\bigl( \mL_{\cnabla\tau}\cDelta^s\cu \bigr) \bigl( \cDelta^t\cv \bigr) \right) \\
    & \quad - r\tau\cDelta^{r-1} \left( \bigl( \cDelta^{s+1}\cu \bigr) \bigl( \cDelta^t v \bigr) - \bigl( \cDelta^s\cu \bigr) \bigl( \cDelta^{t+1}\cv \bigr) \right) \\
    & \quad + 2r\cDelta^{r-1} \Bigl( \bigl( \mL_{\cnabla\tau} X\cDelta^s\cu \bigr) \bigl( \cDelta^t\cv \bigr) + \bigl( (X+n+1)\mL_{\cnabla\tau}\cDelta^s\cu \bigr) \bigl( \cDelta^t\cv \bigr) \\
     & \qquad - 2(r-1)\bigl( \mL_{\cnabla\tau}\cDelta^s\cu \bigr) \bigl( \cDelta^t\cv \bigr) + 2\bigl( \mL_{\cnabla\tau}\cDelta^s\cu \bigr) \bigl( X\cDelta^t\cv \bigr) \Bigr) .
  \end{split}
 \end{equation}
 Now recall that
 \begin{equation}
  \label{eqn:Lie-X} [ \mL_{\cnabla\tau} , X ] = \mL_{\cnabla\tau} .
 \end{equation}
 Combining~\eqref{eqn:Deltaell-Q-commutator}, \eqref{eqn:commutator-almost-done} and~\eqref{eqn:Lie-X} yields the desired conclusion.
\end{proof}

It is now straightforward to compute the commutator of $D_{2k;-\frac{n-2k}{3},-\frac{n-2k}{3}}$ with the first spherical harmonics.

\begin{proof}[Proof of \cref{commutator}]
 Define $\mC \colon \cmE\bigl[-\frac{n-2k+3}{3}\bigr] \otimes \cmE\bigl[ -\frac{n-2k}{3} \bigr] \to \cmE\bigl[ -\frac{2n+2k+3}{3} \bigr]$ by
 \begin{equation*}
  \mC( \cu \otimes \cv ) := \sum_{i=0}^n x^i \left( \tau \cD_{2k}(x^i\cu \otimes \cv) - x^i \cD_{2k}( \tau\cu \otimes \cv ) \right) .
 \end{equation*}
 \Cref{ovsienko-redou-1d} implies that $\mC$ is tangential, and hence
 \begin{equation*}
  \mC( \cu \otimes \cv) \rv_{Q=0,\tau=1} = \sum_{i=0}^n x^i [D_{2k},x^i](u \otimes v) ,
 \end{equation*}
 where $u := \cu\rv_{Q=0,\tau=1}$ and $v := \cv\rv_{Q=0,\tau=1}$.
 Direct computation using~\eqref{eqn:commutator-Delta-x} and~\eqref{eqn:commutator-Delta-Lie} yields
 \begin{align*}
  \MoveEqLeft \mC( \cu \otimes \cv ) = 2\sum_{i=0}^n \sum_{s=0}^k \sum_{t=0}^{k-s} \multinom{k}{s}{t} x^i \Bigl[ (k-s-t)a_{s,t} \mL_{X^i}\cDelta^{k-1-s-t} \left( \bigl( \cDelta^s\cu \bigr) \bigl( \cDelta^t\cv \bigr) \right) \\
  & + sa_{s,t}\left( \tau\cDelta^{k-s-t}\left( \bigl(\mL_{\nabla x^i}\cDelta^{s-1}\cu \bigr) \bigl( \cDelta^t\cv \bigr) \right) - x^i\cDelta^{k-s-t} \left( \bigl( \mL_{\nabla\tau} \cDelta^{s-1}\cu \bigr) \bigl( \cDelta^t\cv \bigr) \right) \right) \Bigr] .
 \end{align*}
 Applying \cref{basic-commutator,eqn:sum-xi,eqn:sum-Lie-xi} yields
 \begin{align*}
  \MoveEqLeft \mC( \cu \otimes \cv ) \equiv 2\tau \sum_{s=0}^k \sum_{t=0}^{k-s} \multinom{k}{s}{t} \Bigl[ (k-s-t)a_{s,t} X\cDelta^{k-1-s-t} \left( \bigl( \cDelta^s\cu \bigr) \bigl( \cDelta^t\cv \bigr) \right) \\
   & + sa_{s,t}\cDelta^{k-s-t} \left( \bigl( (X-k+s+t)\cDelta^{s-1}\cu \bigr) \bigl( \cDelta^t\cv \bigr) \right) \\
   & - (k-s-t)sa_{s,t} \cDelta^{k-1-s-t} \left( \bigl( \cDelta^s\cu \bigr) \bigl( \cDelta^t\cv \bigr) - \bigl( \cDelta^{s-1}\cu \bigr) \bigl( \cDelta^{t+1} \cv \bigr) \right) \Bigr] \mod Q .
 \end{align*}
 Evaluating the weights and re-indexing the summation yields
 \begin{align*}
  \mC( \cu \otimes \cv ) & \equiv -2\tau \sum_{s=0}^k \sum_{t=0}^{k-s} k\multinom{k-1}{s}{t} \biggl[ \Bigl( \frac{2(n+k)}{3} + s - 1 \Bigr)a_{s,t} - ta_{s+1,t-1} \\
   & \quad + \Bigl( \frac{n+k}{3} + s - t \Bigr)a_{s+1,t} \biggr] \cDelta^{k-1-s-t} \left( \bigl( \cDelta^s\cu \bigr) \bigl( \cDelta^t\cv \bigr) \right) \mod Q .
 \end{align*}
 Denote
 \begin{equation*}
  c_{s,t} := \left( \frac{2(n+k)}{3} + s - 1 \right)a_{s,t} - ta_{s+1,t-1} + \left( \frac{n+k}{3} + s - t \right)a_{s+1,t} .
 \end{equation*}
 Direct computation gives
 \begin{equation*}
  c_{s,t} = \frac{(n+2k-2)(n+k-3)}{6}\frac{ \Gamma\bigl( \frac{n-2k}{6} + s + t \bigr) \Gamma\bigl( \frac{n+4k}{6} - s - 1 \bigr) \Gamma\bigl( \frac{n+4k}{6} - t \bigr) }{ \Gamma\bigl( \frac{n-2k}{6} \bigr)^3 } .
 \end{equation*}
 We conclude that
 \begin{equation*}
  \mC( \cu \otimes \cv ) \equiv -\frac{k(n+2k-2)(n+k-3)(n-2k)}{18} \tau \cD_{2k-2; -\frac{n-2k+3}{3},-\frac{n-2k}{3}} \mod Q .
 \end{equation*}
 Restricting to $S^n$ yields the desired conclusion.
\end{proof}

\section{An intrinsic formula on Einstein manifolds}
\label{sec:einstein}

In this section we give an intrinsic formula for the Ovsienko--Redou operators of an Einstein manifold.

To begin, recall~\cite{FeffermanGraham2012} that the ambient space of an Einstein manifold is known.
This gives rise to a simple formula for the ambient Laplacian acting on suitably chosen extensions (cf.\ \cite{Matsumoto2013}*{Lemma~4.1}).

\begin{lemma}
 \label{einstein-compute}
 Let $(M^n,g)$ be an Einstein manifold with $\Ric = 2(n-1)\lambda g$ and let
 \begin{equation*}
  \bigl(\cmG , \cg \bigr) := \bigl( \bR_+ \times M \times (-\varepsilon,\varepsilon) , 2\rho \, dt^2 + 2t \, dt \, d\rho + t^2(1+\lambda\rho)^2g \bigr)
 \end{equation*}
 be its ambient space.
 Set $\tau := t(1+\lambda\rho)$.
 For any $f \in C^\infty(M)$ and $w \in \bR$, it holds that
 \begin{equation*}
  \cDelta \tau^w \cf = \tau^{w-2} \left( \Delta + 2w(n+w-1)\lambda \right) \cf ,
 \end{equation*}
 where $\cf(t,x,\rho) := f(x)$.
\end{lemma}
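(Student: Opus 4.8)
The plan is to reduce everything to the known formula~\eqref{eqn:ambient-laplacian-formula} for the ambient Laplacian, exploiting the fact that for an Einstein metric the Fefferman--Graham expansion terminates~\cite{FeffermanGraham2012}: here $g_\rho = (1+\lambda\rho)^2 g$. The essential observation is that the extension appearing in the statement can be written as $\tau^w\cf = t^w F$ with $F(x,\rho) := (1+\lambda\rho)^w f(x)$, which is precisely of the form $t^w f(x,\rho)$ to which~\eqref{eqn:ambient-laplacian-formula} applies. Thus $\cDelta(\tau^w\cf)$ equals $t^{w-2}$ times the second-order operator in~\eqref{eqn:ambient-laplacian-formula} applied to $F$, and the whole problem reduces to evaluating that operator on the explicit function $F$.

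First I would record the Einstein-specific data entering~\eqref{eqn:ambient-laplacian-formula}. Writing $u := 1+\lambda\rho$, the components of $g_\rho$ are $u^2 g_{ij}$, so $g_{ij}' = 2\lambda u\, g_{ij}$ and $g_\rho^{ij} = u^{-2}g^{ij}$, whence $g_\rho^{ij}g_{ij}' = 2n\lambda/u$. Moreover, since $u$ is independent of $x$, the metric $g_\rho$ is a constant rescaling of $g$ in the $x$-directions, so $\Delta_{g_\rho}F = u^{-2}\Delta_g(u^w f) = u^{w-2}\Delta f$. I would then substitute $F = u^w f$ into~\eqref{eqn:ambient-laplacian-formula}, using $\partial_\rho F = w\lambda u^{w-1}f$ and $\partial_\rho^2 F = w(w-1)\lambda^2 u^{w-2}f$, and expand the four resulting terms.

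The only genuine work is the bookkeeping in collecting these terms, and the single point that makes it succeed is the identity $u - \lambda\rho = 1$. Grouping by powers of $u$, the $u^{w-1}$ contributions combine to $2w\lambda(w+n-1)u^{w-1}f$, while the $\rho$-dependent $u^{w-2}$ contributions combine to $-2w\lambda^2(w+n-1)\rho\,u^{w-2}f$; factoring out $2w\lambda(w+n-1)u^{w-2}f$ leaves the factor $u - \lambda\rho = 1$, so these two families collapse to $2w(w+n-1)\lambda\,u^{w-2}f$. Together with the $\Delta_{g_\rho}$ term $u^{w-2}\Delta f$, the bracket becomes $u^{w-2}\bigl(\Delta + 2w(n+w-1)\lambda\bigr)f$, and multiplying by $t^{w-2}$ and recognizing $t^{w-2}u^{w-2} = \tau^{w-2}$ gives the claim. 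I expect the main obstacle to be purely organizational --- keeping track of which terms carry a factor of $\rho$ and which power of $u$ each contributes --- rather than conceptual; once the substitution is in place, the telescoping via $u - \lambda\rho = 1$ is immediate.
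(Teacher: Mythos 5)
Your proposal is correct and follows essentially the same route as the paper: both specialize the general ambient Laplacian formula~\eqref{eqn:ambient-laplacian-formula} to the Einstein case $g_\rho = (1+\lambda\rho)^2 g$ (where $g_\rho^{ij}g_{ij}' = 2n\lambda(1+\lambda\rho)^{-1}$) and then evaluate it on $F(x,\rho) = (1+\lambda\rho)^w f(x)$. The only difference is that you write out the final bookkeeping --- the collapse via $(1+\lambda\rho) - \lambda\rho = 1$ --- which the paper compresses into ``the conclusion readily follows.''
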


\begin{proof}
 One readily computes from~\eqref{eqn:ambient-laplacian-formula} that if $f=f(x,\rho)$, then
 \begin{multline*}
  \cDelta t^wf = t^{w-2} \bigl( -2\rho\partial_\rho^2 + (2w+n-2-2n(1+\lambda\rho)^{-1}\lambda\rho)\partial_\rho \\
   + \Delta_{g_\rho} + nw(1+\lambda\rho)^{-1}\lambda \bigr)f .
 \end{multline*}
 The conclusion readily follows.
\end{proof}

Applying \cref{einstein-compute} yields an intrinsic formula for the Ovsienko--Redou operators of an Einstein manifold.

\begin{proof}[Proof of \cref{einstein}]
 Let $r \in \bN_0$.
 \Cref{einstein-compute} implies that
 \begin{equation*}
  \cDelta^r \tau^{w} f = \tau^{w-2r}L_{r;w}f 
 \end{equation*}
 for any $r \in \bN$ and $w \in \bR$.
 The conclusion readily follows.
\end{proof}

\section{Conformally invariant differential operators}
\label{sec:fsa}

We conclude this article by discussing the formal self-adjointness of the Ovsienko--Redou operators and the conformally invariant operators they determine as in \cref{linear-operators}.
A key simplifying step is the following linear analogue of \cref{when-tangential}.

\begin{lemma}
 \label{linear-operator-general}
 Let $(M^n,[g])$ be a conformal manifold.
 Let $k \leq n/2$ be a positive integer and fix $\cf \in \cmE[-2\ell]$.
 Then
 \begin{equation*}
  \cD_{\cf}(\cu) := \sum_{s=0}^k \binom{k}{s}\frac{\Gamma(\ell+s)\Gamma(\ell+k-s)}{\Gamma(\ell)^2} \cDelta^{k-s}\left( \cf\cDelta^s\cu \right)
 \end{equation*}
 defines a tangential operator $\cD_{\cf} \colon \cmE\bigl[-\frac{n-2k-2\ell}{2}\bigr] \to \cmE\bigl[-\frac{n+2k+2\ell}{2}\bigr]$, and hence determines a natural conformally invariant differential operator $D_{\cf} \colon \mE\bigl[-\frac{n-2k-2\ell}{2}\bigr] \to \mE\bigl[-\frac{n+2k+2\ell}{2}\bigr]$.
\end{lemma}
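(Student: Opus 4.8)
The plan is to adapt the tangentiality criterion from the proof of \cref{when-tangential} to this one-variable setting. Since $\cf$ is fixed, $\cD_{\cf}$ is tangential precisely when $\cD_{\cf}(Q\cw) \equiv 0 \bmod Q$ for every $\cw$ of the appropriate weight: indeed $Q$ is a defining function for $\mG$, so the kernel of $\iota^\ast$ on $\cmE[w_0]$, $w_0 := -\frac{n-2k-2\ell}{2}$, consists exactly of the multiples $Q\cw$. Thus only the $s$-direction recursion \eqref{eqn:when-tangential-s} is relevant here; the $t$-direction plays no role, because $\cf$ is never differentiated.

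First I would record the weight identity $2w_0 + n = 2k + 2\ell$, equivalently $w_0 + \tfrac{n}{2} = k+\ell$, which is what makes the structure constants collapse. Writing $b_s := \binom{k}{s}\frac{\Gamma(\ell+s)\Gamma(\ell+k-s)}{\Gamma(\ell)^2}$ and applying \eqref{eqn:tangential-1} with $t=0$, with $\cv = \cf \in \cmE[-2\ell]$, and with the first (differentiated) function taken to be $Q\cw$ (so that $w_1 = w_0$, $w_2 = -2\ell$, $r = k-s$), the identity $2w_0+n = 2k+2\ell$ reduces the two constants to $2s(2w_0+n-2s) = 4s(k+\ell-s)$ and $2(k-s)(2w_0 - 4\ell + n - 2(k-s) - 4s) = -4(k-s)(\ell+s)$. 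Hence
\[
\cDelta^{k-s}\!\left(\cf\,\cDelta^s(Q\cw)\right) \equiv 4s(k+\ell-s)\,\cDelta^{k-s}\!\left(\cf\,\cDelta^{s-1}\cw\right) - 4(k-s)(\ell+s)\,\cDelta^{k-s-1}\!\left(\cf\,\cDelta^s\cw\right) \bmod Q .
\]
Summing against $b_s$ and re-indexing the first sum by $s \mapsto s+1$ collects the coefficient of $\cDelta^{k-s-1}(\cf\,\cDelta^s\cw)$, for $0 \le s \le k-1$, into $4\bigl[(s+1)(k+\ell-s-1)b_{s+1} - (k-s)(\ell+s)b_s\bigr]$. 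Therefore $\cD_{\cf}$ is tangential if and only if
\[
(s+1)(k+\ell-s-1)\,b_{s+1} = (k-s)(\ell+s)\,b_s \qquad (0 \le s \le k-1).
\]

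It then remains to verify that the stated $b_s$ solve this recursion, which is the one genuine computation. Using $\binom{k}{s+1}/\binom{k}{s} = (k-s)/(s+1)$, $\Gamma(\ell+s+1)/\Gamma(\ell+s) = \ell+s$, and $\Gamma(\ell+k-s-1)/\Gamma(\ell+k-s) = 1/(\ell+k-s-1)$, one finds $b_{s+1}/b_s = \frac{(k-s)(\ell+s)}{(s+1)(k+\ell-s-1)}$, exactly as required; every factor $k+\ell-s-1$ that appears is positive for $0 \le s \le k-1$, so no degeneration occurs. This proves tangentiality, and the weight count (each summand lowers weight by $2k+2\ell$) confirms the target space $\cmE[-\frac{n+2k+2\ell}{2}]$. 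Finally, exactly as in the proof of \cref{ovsienko-redou-1d}, formula \eqref{eqn:ambient-laplacian-formula} shows the induced operator $D_{\cf}$ is natural and, since $k \le n/2$, depends on $g_\rho$ only through quantities unaffected by the ambiguity of the ambient metric; hence $D_{\cf}$ is a well-defined natural conformally invariant operator. The only point requiring care is the bookkeeping of the two structure constants in \eqref{eqn:tangential-1}, but the fixed weight relation makes these explicit, so I anticipate no real obstacle.
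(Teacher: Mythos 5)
Your proposal is correct and is essentially the paper's own proof written out in full: the paper merely observes that the coefficients satisfy \eqref{eqn:when-tangential-s} with $w_1=-\frac{n-2k-2\ell}{2}$, $w_2=-2\ell$, $t=0$ (which is exactly your recursion $(s+1)(k+\ell-s-1)b_{s+1}=(k-s)(\ell+s)b_s$ after absorbing the binomial factor), invokes the proof of \cref{when-tangential} for tangentiality in the single differentiated slot, and obtains naturality and well-definedness as in \cref{ovsienko-redou-1d}, just as you do. The only blemish is your parenthetical claim that $k+\ell-s-1>0$ so that ``no degeneration occurs'': this fails for $\ell\le 0$ (cases the paper actually uses, cf.\ the remarks after \cref{fourth-order-fsa,sixth-order-fsa}), but since $\Gamma(\ell+s)\Gamma(\ell+k-s)/\Gamma(\ell)^2=\prod_{j=0}^{s-1}(\ell+j)\cdot\prod_{j=0}^{k-s-1}(\ell+j)$ is a polynomial in $\ell$, the recursion you verify for generic $\ell$ is a polynomial identity and therefore holds for all $\ell$, so nothing essential is lost.
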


\begin{remark}
 The operator $D_{\cf}$ is natural in the sense that given $g \in [g]$, one can express $D_{\cf}$ as a complete contraction of a polynomial in $\nabla^\ell\Rm$, $\nabla^\ell u$, and the Taylor coefficients of $\cf$ with respect to $\rho$.
\end{remark}

\begin{proof}
 Observe that the coefficients of $\cD_{\cf}$ satisfy~\eqref{eqn:when-tangential-s}.
 It follows from the proof of \cref{when-tangential} that $\cD_{\cf}$ is tangential.
 The properties of $D_{\cf}$ follow as in the proof of \cref{ovsienko-redou-1d}.
\end{proof}

The weights in \cref{linear-operator-general} are chosen so that the induced operators $D_{\cf}$ can be formally self-adjoint.
We adapt Fefferman and Graham's derivation~\cite{FeffermanGraham2013} of Juhl's formulas~\cite{Juhl2013} for the GJMS operators to prove formal self-adjointness when $k\leq3$.

Fix a Riemannian manifold $(M^n,g)$.
Let $(\cmG,\cg)$ be its ambient space with $\cg$ as in~\eqref{eqn:straight-and-normal}.
Set
\begin{equation*}
 w(\rho) := \left( \frac{\det g_\rho}{\det g} \right)^{1/4}
\end{equation*}
and denote $\cDelta_w := w \circ \cDelta \circ w^{-1}$.
Direct computation~\cite{FeffermanGraham2013}*{Equation~(2.4)} yields
\begin{align*}
 \cDelta_w(t^\gamma u) & = t^{\gamma-2} \left[ -2\rho\partial_\rho^2 + (2\gamma + n-2)\partial_\rho + \cmM(\rho) \right] u , \\
 \cmM(\rho) & := \delta(g_\rho^{-1}d) - \frac{[-2\rho\partial_\rho^2 + (n-2)\partial_\rho + \delta(g_\rho^{-1}d)]w(\rho)}{w(\rho)} ,
\end{align*}
where $u = u(x,\rho)$ and $\delta$ is the divergence with respect to $g=g_0$.
Note in particular that $\cmM(\rho)$ is a second-order, formally self-adjoint operator on $(M^n,g)$ for each $\rho \in \bR$.
Indeed, we may consider $\cmM(\rho)$ as a generating function
\begin{equation*}
 \cmM(\rho) = \sum_{N\geq1} \frac{1}{\bigl((N-1)!\bigr)^2}\left(-\frac{\rho}{2}\right)^{N-1}\mM_{2N} 
\end{equation*}
for a family $\{ \mM_{2N} \}_{N\in\bN}$ of second-order, formally self-adjoint operators on $(M^n,g)$.
Denote
\begin{equation}
 \label{eqn:mR}
 \mR_j := -2\rho\partial_\rho^2 + 2j\partial_\rho + \cmM(\rho) .
\end{equation}
Given $u \in C^\infty(M)$, define $\cu := t^{-\frac{n-2k-2\ell}{2}}w^{-1}u$, where $u(x,\rho) := u(x)$.
Given $\cf \in \cmE[-2\ell]$, define $f=f(x,\rho)$ by $\cf = t^{-2\ell}f$.
Since $w(0)=1$ and $\cDelta_w^k = w \circ \cDelta^k \circ w^{-1}$ for all $k \in \bN_0$, we conclude from \cref{linear-operator-general} that
\begin{equation}
 \label{eqn:linear-operator-general-nice}
 \begin{split}
 D_{\cf}(u) & = \left. \sum_{s=0}^k b_s\mR_{1-k-\ell}\dotsm\mR_{k-2s-\ell-1} \bigl( f\mR_{k+\ell-2s+1}\dotsm\mR_{k+\ell-1}u \bigr) \right|_{\rho=0} , \\
  b_s & := \binom{k}{s} \frac{\Gamma(\ell+s)\Gamma(\ell+k-s)}{\Gamma(\ell)^2} ,
 \end{split}
\end{equation}
where
\begin{align*}
 \mR_{1-k-\ell}\dotsm\mR_{k-2s-\ell-1} & := \prod_{j=1}^{k-s} \mR_{2j-k-\ell-1} , \\
 \mR_{k+\ell-2s+1}\dotsm\mR_{k+\ell-1} & := \prod_{j=1}^s \mR_{k+\ell-2s-1+2j}
\end{align*}
with the convention that the empty product equals the identity operator.

We expect that one can expand~\eqref{eqn:linear-operator-general-nice} to produce a manifestly formally self-adjoint formula for $D_{\cf}$ for all orders (cf.\ \citelist{ \cite{FeffermanGraham2013}*{Theorem~1.1} \cite{Juhl2013}*{Theorem~1.1} }).
The next three lemmas verify this expectation in the low-order cases $k \in \{ 1, 2, 3\}$.

Throughout the rest of this section we use primes to denote the evaluation of derivatives of given extensions with respect to $\rho$ at $\rho=0$;
e.g.\ $f^\prime := \partial_\rho\cf\rv_{\rho=0}$ and $f^{\prime\prime} := \partial_\rho^2\cf\rv_{\rho=0}$.

\begin{lemma}
 \label{second-order-fsa}
 Let $(M^n,g)$ be a Riemannian manifold and fix $\cf \in \cmE[-2\ell]$.
 Let $D_{\cf} \colon \mE\bigl[-\frac{n-2-2\ell}{2}\bigr] \to \mE\bigl[-\frac{n+2+2\ell}{2}\bigr]$ be as in \cref{linear-operator-general}.
 Then
 \begin{equation*}
  D_{\cf}u = \ell \bigl( f\mM_2u + \mM_2(uf) - 2\ell u f^\prime \bigr) .
 \end{equation*}
 In particular, $D_{\cf}$ is formally self-adjoint.
\end{lemma}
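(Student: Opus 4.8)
The plan is to specialize the generating formula~\eqref{eqn:linear-operator-general-nice} to the case $k=1$ and evaluate the resulting operators at $\rho=0$. First I would record that for $k=1$ the coefficients simplify to $b_0=b_1=\ell$, and that each of the two products of $\mR$-factors degenerates to a single operator: the $s=0$ summand reads $\ell\,\mR_{-\ell}(fu)\rv_{\rho=0}$, while the $s=1$ summand reads $\ell\,f\,\mR_\ell u\rv_{\rho=0}$, where $\mR_k=-2\rho\partial_\rho^2+2k\partial_\rho+\cmM(\rho)$ as in~\eqref{eqn:mR}. Here $f=f(x,\rho)$ is defined by $\cf=t^{-2\ell}f$ and $u(x,\rho):=u(x)$.

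Next I would evaluate both summands at $\rho=0$, using three facts: the explicit factor $\rho$ annihilates the second-order term $-2\rho\partial_\rho^2$ upon restriction; $u(x,\rho)=u(x)$ is independent of $\rho$, so $\partial_\rho u=0$; and $\cmM(0)=\mM_2$ by the generating-function expansion of $\cmM(\rho)$. The $s=1$ term then gives $\mR_\ell u\rv_{\rho=0}=\mM_2 u$, hence contributes $\ell\,f\,\mM_2 u$. For the $s=0$ term I apply $\mR_{-\ell}=-2\rho\partial_\rho^2-2\ell\partial_\rho+\cmM(\rho)$ to the product $fu$: the second-order piece drops out at $\rho=0$, the first-order piece produces $-2\ell\,u\,\partial_\rho f\rv_{\rho=0}=-2\ell u f'$ (the derivative lands on $f$ precisely because $\partial_\rho u=0$), and the zeroth-order piece yields $\mM_2(uf)$. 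Collecting the two summands gives exactly $D_{\cf}u=\ell\bigl(f\mM_2 u+\mM_2(uf)-2\ell u f'\bigr)$, as claimed.

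Finally, formal self-adjointness follows at once from this expression. Since $\mM_2$ is a formally self-adjoint second-order operator on $(M^n,g)$, integration by parts shows that the pairing $(u_1,u_2)\mapsto\int_M u_1\bigl(f\mM_2 u_2+\mM_2(u_2 f)\bigr)\,\dvol$ is symmetric, as each term transfers $\mM_2$ from one factor to the other and the multiplier $f$ is shared; while the remaining term $u\mapsto u f'$ is multiplication by a fixed function and hence manifestly symmetric. I do not expect a genuine obstacle here. The only point requiring care is the bookkeeping of the single $\rho$-derivative in $\mR_{-\ell}$: it must act on the $\rho$-dependent factor $f$ rather than on the $\rho$-independent $u$, and it is exactly this that produces the first-order Taylor coefficient $f'$ distinguishing the correct operator from the naive symmetrization $\ell\bigl(f\mM_2 u+\mM_2(uf)\bigr)$.
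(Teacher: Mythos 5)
Your proposal is correct and follows essentially the same route as the paper: specialize \eqref{eqn:linear-operator-general-nice} to $k=1$ (giving $D_{\cf}u = \ell\bigl(\mR_{-\ell}(uf) + f\mR_\ell u\bigr)\rv_{\rho=0}$), expand via \eqref{eqn:mR} using $u'=0$ and $\cmM(0)=\mM_2$, and deduce formal self-adjointness from that of $\mM_2$. The paper's proof is just a terser version of yours, leaving the $\rho=0$ evaluation and the integration-by-parts symmetry check implicit.
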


\begin{proof}
 Equation~\eqref{eqn:linear-operator-general-nice} asserts that
 \begin{equation*}
  D_{\cf}u = \left. \ell\bigl( \mR_{-\ell}(uf) + f\mR_\ell u) \right|_{\rho=0} .
 \end{equation*}
 Expanding this via~\eqref{eqn:mR} and recalling that $u^\prime=0$ yields the claimed formula for $D_{\cf}$.
 The final conclusion follows from the fact that $\mM_2$ is formally self-adjoint.
\end{proof}

\begin{lemma}
 \label{fourth-order-fsa}
 Let $(M^n,g)$, $n \geq 4$, be a Riemannian manifold and fix $\cf \in \cmE[-2\ell]$.
 Let $D_{\cf} \colon \mE\bigl[-\frac{n-4-2\ell}{2}\bigr] \to \mE\bigl[-\frac{n+4+2\ell}{2}\bigr]$ be as in \cref{linear-operator-general}.
 Then
 \begin{multline*}
  D_{\cf}u = \ell(\ell+1) \Bigl( f\mM_2^2u + \mM_2^2(uf) + \frac{2\ell}{\ell+1} \mM_2(f\mM_2u) \\
    + (\ell+1)\bigl(f \mM_4u + \mM_4(uf) \bigr) - 4\ell\bigl( f^\prime \mM_2u + \mM_2(uf^\prime) \bigr) + 4\ell(\ell+1)uf^{\prime\prime} \Bigr) .
 \end{multline*}
 In particular, $D_{\cf}$ is formally self-adjoint.
\end{lemma}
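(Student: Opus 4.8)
The plan is to specialize the general formula~\eqref{eqn:linear-operator-general-nice} to $k=2$ and expand everything to the order in $\rho$ that survives at $\rho=0$, exactly as in the proof of \cref{second-order-fsa} but with one more factor of $\mR$ in each product. First I would record the coefficients
\[
 b_0 = b_2 = \ell(\ell+1), \qquad b_1 = 2\ell^2 ,
\]
which follow from $b_s = \binom{2}{s}\Gamma(\ell+s)\Gamma(\ell+2-s)/\Gamma(\ell)^2$, and read off the three summands of~\eqref{eqn:linear-operator-general-nice}, namely
\[
 D_{\cf}u = \left[ b_0\,\mR_{-1-\ell}\mR_{1-\ell}(fu) + b_1\,\mR_{-1-\ell}\bigl(f\mR_{\ell+1}u\bigr) + b_2\,f\,\mR_{\ell-1}\mR_{\ell+1}u \right]_{\rho=0} .
\]
I would then expand the generating function for $\cmM(\rho)$, retaining only the data needed at the origin: $\cmM(0)=\mM_2$ and $\cmM'(0)=-\tfrac12\mM_4$.

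The heart of the computation is to evaluate the three summands at $\rho=0$, using $\partial_\rho u = 0$ throughout and writing $f,f',f''$ for $f,\partial_\rho f,\partial_\rho^2 f$ at $\rho=0$. The third summand is the simplest: since $\mR_{\ell+1}u=\cmM(\rho)u$, applying $\mR_{\ell-1}$ and setting $\rho=0$ gives $\mM_2^2u-(\ell-1)\mM_4u$, so the $b_2$ contribution is $\ell(\ell+1)\bigl(f\mM_2^2u-(\ell-1)f\mM_4u\bigr)$. For the first two summands the outer operator $\mR_{-1-\ell}$ requires both the value and the first $\rho$-derivative of its argument at $\rho=0$. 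Here is the one genuine subtlety: although the inner factor $-2\rho\partial_\rho^2$ in~\eqref{eqn:mR} vanishes at $\rho=0$, its first $\rho$-derivative does not, contributing a term $-2\partial_\rho^2(\,\cdot\,)\rv_{\rho=0}$ that must be tracked. Carrying this out, the $b_0$ summand contributes
\[
 \ell(\ell+1)\bigl[ \mM_2^2(uf) + (\ell+1)\mM_4(uf) - 4\ell\,\mM_2(uf') + 4\ell(\ell+1)\,uf'' \bigr] ,
\]
and the $b_1$ summand contributes
\[
 2\ell^2\bigl[ \mM_2(f\mM_2u) + (\ell+1)\,f\mM_4u - 2(\ell+1)\,f'\mM_2u \bigr] .
\]
Summing the three pieces recovers the claimed formula; in particular the two occurrences of $f\mM_4u$ combine, the coefficients $-\ell(\ell-1)(\ell+1)$ and $2\ell^2(\ell+1)$ adding to $\ell(\ell+1)^2$.

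Finally, formal self-adjointness is read directly off the formula. The two weights $-\tfrac{n-4-2\ell}{2}$ and $-\tfrac{n+4+2\ell}{2}$ sum to $-n$, so the pairing $(u_1,u_2)\mapsto\int_M u_1\,D_{\cf}u_2\,\dvol$ is a well-defined conformally invariant trilinear form in $(u_1,u_2,f)$, and each term of the formula is manifestly symmetric under $u_1\leftrightarrow u_2$: using that every $\mM_{2N}$ is formally self-adjoint, the pairs $f\mM_2^2u$ and $\mM_2^2(uf)$, $f\mM_4u$ and $\mM_4(uf)$, and $f'\mM_2u$ and $\mM_2(uf')$ occur with equal coefficients and are mutually adjoint, while $u\mapsto\mM_2(f\mM_2u)$ and multiplication by $uf''$ are each formally self-adjoint. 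I expect the bookkeeping of the $\rho$-Taylor expansion through the operator compositions---specifically the inner $-2\rho\partial_\rho^2$ contribution noted above---to be the only real obstacle; everything else is routine.
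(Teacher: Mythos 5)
Your proposal is correct and follows essentially the same route as the paper: specialize~\eqref{eqn:linear-operator-general-nice} to $k=2$ with $b_0=b_2=\ell(\ell+1)$, $b_1=2\ell^2$, evaluate the three summands at $\rho=0$ using $u'=0$, $\cmM(0)=\mM_2$, $\cmM'(0)=-\tfrac12\mM_4$ (your three intermediate expressions agree exactly with the paper's displayed identities, including the $-2\partial_\rho^2$ contribution from differentiating the inner $-2\rho\partial_\rho^2$), and then read off formal self-adjointness term by term from self-adjointness of $\mM_2$ and $\mM_4$.
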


\begin{remark}
 If $\ell=-1$, then the conclusion of \cref{fourth-order-fsa} is that
 \begin{equation*}
  D_{\cf}u = 2\mM_2(f\mM_2u) .
 \end{equation*}
\end{remark}

\begin{proof}
 Equation~\eqref{eqn:linear-operator-general-nice} asserts that
 \begin{equation}
  \label{eqn:pre-fourth-order-fsa}
  D_{\cf}u = \ell(\ell+1)\mR_{-1-\ell}\mR_{1-\ell}(uf) + 2\ell^2\mR_{-1-\ell}(f\mR_{\ell+1}u) + \ell(\ell+1)f\mR_{\ell-1}\mR_{\ell+1}u ,
 \end{equation}
 where the right-hand side is evaluated at $\rho=0$.
 Recall that $u^\prime=0$.
 Direct computation yields
 \begin{align*}
  f\mR_{\ell-1}\mR_{\ell+1}u & = -(\ell-1)f\mM_4u + f\mM_2^2u , \\
  \mR_{-1-\ell} (f\mR_{\ell+1}u) & = -2(\ell+1)f^\prime\mM_2u + (\ell+1)f\mM_4u + \mM_2(f\mM_2u) , \\
  \mR_{-1-\ell}\mR_{1-\ell}(uf) & = 4\ell(\ell+1)uf^{\prime\prime} - 4\ell\mM_2(uf^\prime) + (\ell+1)\mM_4(uf) + \mM_2^2(uf)
 \end{align*}
 at $\rho=0$.
 Inserting these into~\eqref{eqn:pre-fourth-order-fsa} yields the claimed formula for $D_{\cf}$.
 The final conclusion follows from the fact that $\mM_2$ and $\mM_4$ are formally self-adjoint.
\end{proof}

\begin{lemma}
 \label{sixth-order-fsa}
 Let $(M^n,g)$, $n \geq 6$, be a Riemannian manifold and fix $\cf \in \cmE[-2\ell]$.
 Let $D_{\cf} \colon \mE\bigl[-\frac{n-6-2\ell}{2}\bigr] \to \mE\bigl[-\frac{n+6+2\ell}{2}\bigr]$ be as in \cref{linear-operator-general}.
 Then
 \begin{align*}
  D_{\cf}u & = \ell(\ell+1)(\ell+2) \Bigl( f\mM_2^3u + \mM_2^3(uf) + \frac{3\ell}{\ell+2}\bigl( \mM_2(f\mM_2^2u) + \mM_2^2(f\mM_2u) \bigr) \\
   & \quad + 3\ell\bigl( \mM_2(f\mM_4u) + \mM_4(f\mM_2u) \bigr) + 2(\ell+1)\bigl( f\mM_4\mM_2u + \mM_2\mM_4(uf) \bigr) \\
   & \quad + (\ell+2)\bigl( f\mM_2\mM_4u + \mM_4\mM_2(uf)\bigr) + \frac{(\ell+1)(\ell+2)}{2}\bigl( f\mM_6u + \mM_6(uf)\bigr) \\
   & \quad - \frac{12\ell(\ell+1)}{\ell+2}\mM_2(f^\prime\mM_2u) - 6\ell\bigl( f^\prime\mM_2^2u + \mM_2^2(uf^\prime \bigr) \\
   & \quad - 6\ell(\ell+2)\bigl( f^\prime\mM_4 + \mM_4(f^\prime u) \bigr) + 12\ell(\ell+1)\bigl( f^{\prime\prime}\mM_2u + \mM_2(uf^{\prime\prime})\bigr) \\
   & \quad - 8\ell(\ell+1)(\ell+2)uf^{\prime\prime\prime} \Bigr)
\end{align*}
 In particular, $D_{\cf}$ is formally self-adjoint.
\end{lemma}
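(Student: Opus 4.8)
The plan is to mimic the proof of \cref{fourth-order-fsa}, now carried out for $k=3$. First I would specialize~\eqref{eqn:linear-operator-general-nice} to this case. The coefficients simplify to $b_0=b_3=\ell(\ell+1)(\ell+2)$ and $b_1=b_2=3\ell^2(\ell+1)$; the symmetry $b_s=b_{3-s}$ is exactly what will guarantee that the assembled operator is formally self-adjoint. Reading off the two products of $\mR$-operators in~\eqref{eqn:linear-operator-general-nice}, the four summands, all evaluated at $\rho=0$, are
\begin{align*}
 D_{\cf}u &= b_0\,\mR_{-2-\ell}\mR_{-\ell}\mR_{2-\ell}(uf) + b_1\,\mR_{-2-\ell}\mR_{-\ell}\bigl(f\mR_{\ell+2}u\bigr) \\
  &\quad + b_2\,\mR_{-2-\ell}\bigl(f\mR_\ell\mR_{\ell+2}u\bigr) + b_3\,f\mR_{\ell-2}\mR_\ell\mR_{\ell+2}u ,
\end{align*}
where each $\mR_m$ is given by~\eqref{eqn:mR}.

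Next I would expand each of these four triple compositions, substituting~\eqref{eqn:mR} together with the expansion $\cmM(\rho)=\mM_2-\tfrac{\rho}{2}\mM_4+\tfrac{\rho^2}{16}\mM_6+O(\rho^3)$, and using throughout that $u$ is independent of $\rho$, so that $u'=u''=u'''=0$. This produces four explicit identities at $\rho=0$ --- the degree-three analogues of the three displayed formulas in the proof of \cref{fourth-order-fsa} --- whose coefficients involve $\mM_2,\mM_4,\mM_6$ and the $\rho$-derivatives $f',f'',f'''$. Assembling these with the weights $b_s$ and collecting like terms yields the claimed formula. The main obstacle is precisely this bookkeeping: because each $\mR_m$ contains the term $-2\rho\partial_\rho^2$, I cannot set $\rho=0$ prematurely --- an outer $\partial_\rho$ may differentiate the explicit factor of $\rho$ before the restriction --- so each composition must be expanded as a genuine second-order-in-$\rho$ operator, tracking simultaneously the $\rho$-derivatives landing on $f$ and those landing on the coefficients of $\cmM(\rho)$, before restricting. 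The computation is finite but lengthy; the symmetry $b_s=b_{3-s}$ can be exploited to pair the $s$ and $3-s$ contributions and roughly halve the work.

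Finally I would read off formal self-adjointness directly from the resulting formula. Since each $\mM_{2N}$ is formally self-adjoint and multiplication by any of $f,f',f'',f'''$ is self-adjoint, for operators $A,B$ that are compositions of the $\mM_{2N}$ one has $\bigl(A(fB\,\cdot)\bigr)^\ast=B(fA\,\cdot)$. Thus every summand in the stated formula occurs together with its formal adjoint and the same coefficient: diagonal terms such as $f\mM_2^3u+\mM_2^3(uf)$ (the case $A=\Id$, $B=\mM_2^3$) and $\mM_2(f'\mM_2u)$ (the case $A=B=\mM_2$, so the term equals its own adjoint), and off-diagonal pairs such as $\mM_2(f\mM_2^2u)+\mM_2^2(f\mM_2u)$ or $f\mM_4\mM_2u+\mM_2\mM_4(uf)$. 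Because the formula is organized entirely into such formally self-adjoint units, $D_{\cf}$ is formally self-adjoint, which completes the argument. I would expect the self-adjoint pairing of the final expression to emerge automatically, since it is forced by $b_s=b_{3-s}$ together with the formal self-adjointness of $\cmM(\rho)$; the only real labor lies in verifying the explicit coefficients of the $\mM_6$, $f''$, and $f'''$ terms.
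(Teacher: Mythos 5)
Your proposal follows essentially the same route as the paper's own proof: it specializes \eqref{eqn:linear-operator-general-nice} to $k=3$ with the same coefficients $b_0=b_3=\ell(\ell+1)(\ell+2)$, $b_1=b_2=3\ell^2(\ell+1)$ and the same four triple $\mR$-compositions, expands each one using \eqref{eqn:mR} and $\cmM(\rho)=\mM_2-\tfrac{\rho}{2}\mM_4+\tfrac{\rho^2}{16}\mM_6+O(\rho^3)$ \emph{before} restricting to $\rho=0$ (the same pitfall the paper implicitly handles), and concludes formal self-adjointness exactly as the paper does, from the formal self-adjointness of $\mM_2$, $\mM_4$, $\mM_6$. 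The only difference is that the paper records the four expanded identities explicitly, whereas you defer that bookkeeping; your plan, carried out, reproduces them.
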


\begin{remark}
 If $\ell=-2$, then the conclusion of \cref{sixth-order-fsa} is that
 \begin{equation*}
  D_{\cf}u = -12\bigl( \mM_2(f\mM_2^2u) + \mM_2^2 (f\mM_2u) \bigr) - 48\bigl(\mM_2(f^\prime\mM_2u \bigr) .
 \end{equation*}
\end{remark}

\begin{proof}
 Equation~\eqref{eqn:linear-operator-general-nice} asserts that
 \begin{equation}
  \label{eqn:pre-sixth-order-fsa}
  \begin{split}
   D_{\cf}u & = \ell(\ell+1)(\ell+2)\mR_{-2-\ell}\mR_{-\ell}\mR_{2-\ell}(uf) \\
    & \quad + 3\ell^2(\ell+1)\mR_{-2-\ell}\mR_{-\ell}\bigl( f\mR_{\ell+2}u \bigr) + 3\ell^2(\ell+1)\mR_{-2-\ell}\bigl( f \mR_{\ell}\mR_{\ell+2}u \bigr) \\
    & \quad + \ell(\ell+1)(\ell+2)f\mR_{\ell-2}\mR_{\ell}\mR_{\ell+2}u ,
  \end{split}
 \end{equation}
 where the right-hand side is evaluated at $\rho=0$.
 Recall that $u^\prime=0$.
 Direct computation yields
 \begin{multline*}
  f\mR_{\ell-2}\mR_{\ell}\mR_{\ell+2}u = \frac{(\ell-1)(\ell-2)}{2}f\mM_6u - 2(\ell-1)f\mM_2\mM_4u \\
   - (\ell-2)f\mM_4\mM_2u + f\mM_2^3u
 \end{multline*}
 and
 \begin{multline*}
  \mR_{-2-\ell}\bigl( f\mR_{\ell}\mR_{\ell+2}u \bigr) = 2\ell(\ell+2)f^\prime\mM_4u - 2(\ell+2)f^\prime \mM_2^2u - \frac{(\ell-1)(\ell+2)}{2}f\mM_6u \\
   + (\ell+2)f\bigl( \mM_2\mM_4u + \mM_4\mM_2u \bigr) - \ell\mM_2\bigl( f\mM_4u \bigr) + \mM_2\bigl( f\mM_2^2u \bigr)
 \end{multline*}
 and
 \begin{align*}
  \MoveEqLeft \mR_{-2-\ell}\mR_{-\ell}\bigl( f\mR_{\ell+2}u \bigr) = 4(\ell+1)(\ell+2)f^{\prime\prime}\mM_2u - 4(\ell+1)(\ell+2)f^\prime\mM_4u \\
   & \quad - 4(\ell+1)\mM_2\bigl( f^\prime\mM_2u \bigr) + \frac{(\ell+1)(\ell+2)}{2}f\mM_6u \\
   & \quad + (\ell+2)\mM_4\bigl( f\mM_2u \bigr) + 2(\ell+1)\mM_2\bigl( f\mM_4u \bigr) + \mM_2^2\bigl( f\mM_2u \bigr)
 \end{align*}
 and
 \begin{align*}
  \MoveEqLeft \mR_{-2-\ell}\mR_{-\ell}\mR_{2-\ell}(uf) = -8\ell(\ell+1)(\ell+2)uf^{\prime\prime\prime} + 12\ell(\ell+1)\mM_2\bigl( uf^{\prime\prime} \bigr) - 6\ell\mM_2^2\bigl(uf^\prime\bigr) \\
   & \quad - 6\ell(\ell+2)\mM_4\bigl(uf^\prime\bigr) + \frac{(\ell+1)(\ell+2)}{2}\mM_6(uf) \\
   & \quad + 2(\ell+1)\mM_2\mM_4(uf) + (\ell+2)\mM_4\mM_2(uf) + \mM_2^3(uf)
 \end{align*}
 at $\rho=0$.
 Inserting these identities into~\eqref{eqn:pre-sixth-order-fsa} yields the claimed formula for $D_{\cf}$.
 The final conclusion follows from the fact that $\mM_2$, $\mM_4$, and $\mM_6$ are formally self-adjoint.
\end{proof}

Combining \cref{second-order-fsa,fourth-order-fsa,sixth-order-fsa} yields the formal self-adjointness of a family of operators relevant both to the Ovsienko--Redou operators $D_{2k;-\frac{n-2k}{3},-\frac{n-2k}{3}}$ and the differential operators of \cref{linear-operators}.

\begin{corollary}
 \label{general-fsa}
 Let $(M^n,g)$ be a Riemannian manifold and let $k \leq \min \{ 3, n/2 \}$ be a positive integer.
 Given $\cf \in \cmE[-2\ell]$, let $D \colon \mE\bigl[-\frac{n-2k-2\ell}{2}\bigr] \to \mE\bigl[-\frac{n+2k+2\ell}{2}\bigr]$ be the natural conformally invariant differential operator determined by
 \begin{equation}
  \label{eqn:general-fsa}
  \cD(\cu) := \cD_{2k;-2\ell,-\frac{n-2k-2\ell}{2}}(\cf \otimes \cu)
 \end{equation}
 for $\cu \in \cmE\bigl[ -\frac{n-2k-2\ell}{2} \bigr]$.
 Then $D$ is formally self-adjoint.
\end{corollary}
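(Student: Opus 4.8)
The plan is to express the operator $\cD$ of~\eqref{eqn:general-fsa} as an explicit \emph{real} linear combination of the operators treated in \cref{second-order-fsa,fourth-order-fsa,sixth-order-fsa}, and then to invoke the fact that a real linear combination of formally self-adjoint operators is formally self-adjoint. First I would specialize the coefficients $a_{s,t}$ of \cref{ovsienko-redou-1d} to the weights $w_1 = -2\ell$ and $w_2 = -\frac{n-2k-2\ell}{2}$. The key simplification is that these weights satisfy $-w_1-w_2-\frac{n-2k}{2} = \ell$ and $w_2 + \frac{n-2k}{2} = \ell$, so that
\begin{equation*}
 a_{s,t} = \frac{\Gamma(\ell+s+t)\,\Gamma\bigl(\tfrac{n}{2}-2\ell-s\bigr)\,\Gamma(k+\ell-t)}{\Gamma(\ell)^2\,\Gamma\bigl(\tfrac{n}{2}-k-2\ell\bigr)} .
\end{equation*}
Because $s$ and $t$ range over finitely many nonnegative integers with $s+t \leq k$, each of the three Gamma ratios obtained by pairing a numerator factor against a denominator factor is a genuine finite product of linear factors in $n$ and $\ell$; hence no analytic continuation is needed and $a_{s,t}$ is real.

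Next I would group the double sum defining $\cD(\cu) = \cD_{2k;-2\ell,-\frac{n-2k-2\ell}{2}}(\cf \otimes \cu)$ according to the value of $s$ and compare the inner $t$-sum with the linear operator of \cref{linear-operator-general}. Writing $\cD_{\cg}^{(m)}$ for the order-$2m$ operator of \cref{linear-operator-general} determined by a density $\cg \in \cmE[-2p]$ (carrying weight parameter $p$), I claim that for each fixed $s$ the inner sum equals $C_s\,\cD_{\cDelta^s\cf}^{(k-s)}(\cu)$, where $\cDelta^s\cf \in \cmE[-2(\ell+s)]$ plays the role of the density and
\begin{equation*}
 C_s = \binom{k}{s}\frac{\Gamma(\ell+s)^2\,\Gamma\bigl(\tfrac{n}{2}-2\ell-s\bigr)}{\Gamma(\ell)^2\,\Gamma\bigl(\tfrac{n}{2}-k-2\ell\bigr)} .
\end{equation*}
This is the heart of the argument: one checks directly that the coefficient $\multinom{k}{s}{t}a_{s,t}$ of $\cDelta^{k-s-t}\bigl((\cDelta^s\cf)(\cDelta^t\cu)\bigr)$ in $\cD$ and the corresponding coefficient $\binom{k-s}{t}\frac{\Gamma(\ell+s+t)\Gamma(\ell+k-t)}{\Gamma(\ell+s)^2}$ in $\cD_{\cDelta^s\cf}^{(k-s)}$ have exactly the same dependence on $t$, the ratio being the $t$-independent constant $C_s$. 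The matching relies precisely on the factored form of $a_{s,t}$: the $\Gamma(\ell+s+t)\Gamma(k+\ell-t)$ part already reproduces the $s$--$t$ coupling present in \cref{linear-operator-general}, while the remaining factorials combine to $\binom{k}{s}$ and the remaining Gamma factors combine into $C_s$. This yields the operator identity
\begin{equation*}
 \cD = \sum_{s=0}^k C_s\,\cD_{\cDelta^s\cf}^{(k-s)} .
\end{equation*}

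Finally, since each $\cD_{\cDelta^s\cf}^{(k-s)}$ is tangential by \cref{linear-operator-general}, applying $\iota^\ast$ term by term gives $D = \sum_{s=0}^k C_s\,D_{\cDelta^s\cf}^{(k-s)}$ on $(M^n,g)$. The hypothesis $k \leq 3$ forces $k-s \in \{0,1,2,3\}$ for every $s$, and $n \geq 2k \geq 2(k-s)$, so the dimension restrictions in \cref{fourth-order-fsa,sixth-order-fsa} are automatically satisfied. Each summand with $k-s \in \{1,2,3\}$ is therefore formally self-adjoint by \cref{second-order-fsa,fourth-order-fsa,sixth-order-fsa}, while the $s=k$ summand is multiplication by the restriction of $\cDelta^k\cf$ and hence trivially formally self-adjoint. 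As the constants $C_s$ are real, $D$ is a real linear combination of formally self-adjoint operators, and so $D$ is formally self-adjoint. I expect the only delicate point to be the coefficient bookkeeping of the second step; once the factorization of $a_{s,t}$ is in hand it reduces to cancelling common factors, and no self-adjointness computation beyond \cref{second-order-fsa,fourth-order-fsa,sixth-order-fsa} is required.
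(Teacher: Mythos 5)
Your proposal is correct and follows essentially the same route as the paper: the paper's proof likewise writes $\cD(\cu) = \sum_{s=0}^k \binom{k}{s}\frac{\Gamma(\frac{n}{2}-2\ell-s)\Gamma(\ell+s)^2}{\Gamma(\frac{n}{2}-2\ell-k)\Gamma(\ell)^2}\,\cD_{\cDelta^s\cf}(\cu)$ --- exactly your constants $C_s$ --- invokes \cref{linear-operator-general} for tangentiality of each summand, and then cites \cref{second-order-fsa,fourth-order-fsa,sixth-order-fsa} to conclude that $D$ is a real linear combination of formally self-adjoint operators. Your write-up merely makes explicit the coefficient bookkeeping, the $s=k$ multiplication term, and the dimension checks that the paper leaves as ``direct computation.''
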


\begin{proof}
 Direct computation yields
 \begin{equation*}
  \cD(\cu) = \sum_{s=0}^k \binom{k}{s}\frac{\Gamma\bigl(\frac{n}{2}-2\ell-s\bigr)\Gamma\bigl(\ell+s\bigr)^2}{\Gamma\bigl(\frac{n}{2}-2\ell-k\bigr)\Gamma\bigl(\ell\bigr)^2} \cD_{\cDelta^s\cf}(\cu) .
 \end{equation*}
 We conclude from \cref{linear-operator-general} that $\cD$ is a linear combination of tangential operators with common domain $\cmE\bigl[-\frac{n-2k-2\ell}{2}\bigr]$.
 Since $k \leq 3$, we conclude from \cref{second-order-fsa,fourth-order-fsa,sixth-order-fsa} that the induced operator $D$ is a linear combination of formally self-adjoint operators.
\end{proof}

We conclude with the proofs that the Ovsienko--Redou operators and that differential operators of \cref{linear-operators} are formally self-adjoint if they involve at most six ambient derivatives of the input functions.

\begin{proof}[Proof of \cref{or-fsa}]
 Denote $D_{2k} := D_{2k;-\frac{n-2k}{3},-\frac{n-2k}{3}}$.
 The fact that $a_{s,t}=a_{t,s}$ for this choice of weights implies that $D_{2k}$ is symmetric in its arguments.
 Applying \cref{general-fsa} with $\cf = \cu$ implies that the map $v \mapsto D_{2k}(u \otimes v)$ is formally self-adjoint for all $u \in \mE\bigl[-\frac{n-2k}{3}\bigr]$.
 Combining these observations yields the desired result.
\end{proof}

\begin{proof}[Proof of \cref{linear-operators}]
 This follows immediately from \cref{general-fsa} with $\cf = \cI$.
\end{proof}

\begin{remark}
 The conjectured formal self-adjointness of the introduction follows if the operators of \cref{linear-operator-general} are formally self-adjoint for all $k \in \bN$.
 This is readily seen from the proofs of \cref{general-fsa,or-fsa,linear-operators}.
\end{remark}

 \section*{Acknowledgements}
JSC was supported by the Simons Foundation (Grant \#524601).
He also thanks the University of Washington for providing a productive research environment while part of this research was carried out.
WY was supported by NSFC (Grant No.\ 12071489, No.\ 12025109).

\bibliography{bib}
\end{document}